\newtheorem{thm}{Theorem}[section]
\newtheorem{lem}[thm]{Lemma}
\newtheorem{rmk}[thm]{Remark}
\newtheorem{cor}[thm]{Corollary}
\newtheorem{defn}[thm]{Definition}
\newtheorem{ex}[thm]{Example}
\newcommand{\N}{\mathbb{N}}
\newcommand{\R}{\mathbb{R}}
\newcommand{\mcl}{\mathcal L}
\newcommand{\mcz}{\mathcal Z}
\newcommand{\al}{\alpha}
\newcommand{\be}{\beta}
\newcommand{\del}{\delta}
\newcommand{\ep}{\epsilon}
\newcommand{\sig}{\sigma}
\newcommand{\ka}{\kappa}
\newcommand{\lam}{\lambda}
\renewcommand{\t}[1]{\tilde{#1}} 
\newcommand{\mc}[1]{\mathcal{#1}}
\newcommand{\leb}{\text{Leb}}
\newcommand{\essinf}{\mathop{\mathrm{essinf}}}
\newcommand{\var}{\mathop{\mathrm{var}}}
\newcommand{\spt}{\mathop{\mathrm{supp}}}
\newcommand{\supp}{\mathop{\mathrm{supp}}}
\newcommand\Lk{\mathcal{L}_k}
\def\esssup{\mbox{ess\,sup}}
\title{Ulam's method for Lasota-Yorke maps with holes}
\author[C. Bose, G. Froyland, C. Gonz\'alez-Tokman and R. Murray]{
Christopher Bose
\and
Gary Froyland
\and
Cecilia Gonz\'alez-Tokman
\and
Rua Murray
}
\address[C. Bose]{Department of Mathematics and Statistics, University of Victoria, Victoria, BC, Canada, V8W 3R4}
\address[G. Froyland and C. Gonz\'alez-Tokman]{School of Mathematics and Statistics,
University of New South Wales, Sydney, NSW, 2052, Australia}
\address[R. Murray]{Department of Mathematics and Statistics, University of Canterbury, Private Bag 4800, Christchurch 8140, New Zealand}
\begin{document}

\begin{abstract}
Ulam's method is a rigorous numerical scheme for approximating invariant densities of dynamical systems.
The phase space is partitioned into connected sets and an 
inter-set transition matrix is computed from the dynamics;  an approximate invariant density is read off as the leading left eigenvector of this matrix.
When a hole in phase space is introduced, one instead searches for \emph{conditional} invariant densities and their associated escape rates.
For Lasota-Yorke maps with holes we prove that a simple adaptation of the standard Ulam scheme provides convergent sequences of escape rates (from the leading eigenvalue), conditional invariant densities (from the corresponding left eigenvector), and quasi-conformal measures (from the corresponding right eigenvector).
We also immediately obtain a convergent sequence for the invariant measure supported on the survivor set.
Our approach allows us to consider relatively large holes. We illustrate the approach with several families of examples,
including a class of Lorenz maps.
\end{abstract}

\maketitle

\section{Introduction}

Dynamical systems $\hat{T}:I\to I$ typically model complicated deterministic processes on a phase space $I$.
The map $\hat{T}$ induces a natural action on probability measures $\eta$ on $I$ via $\eta\mapsto\eta\circ \hat{T}^{-1}$.
Of particular interest in ergodic theory are those probability measures that are $\hat{T}$-invariant;  that is, $\eta$ satisfying $\eta=\eta\circ \hat{T}^{-1}$.
If $\eta$ is ergodic, then such $\eta$ describe the time-asymptotic distribution of orbits of $\eta$-almost-all initial points $x\in I$.
In this paper, we consider the situation where a ``hole'' $H_0\subsetneqq I$ is introduced and any orbits of $\hat{T}$ that fall into $H_0$ terminate.
The hole induces an \emph{open dynamical system} $T:X_0\to I$, where $X_0=I\setminus H_0$.
Because trajectories are being lost to the hole, in many cases, there is no $T$-invariant probability measure.
One can, however, consider \emph{conditionally invariant} probability measures \cite{PianigianiYorke}, which satisfy $\eta\circ T^{-1}(I)\cdot\eta=\eta\circ T^{-1}$, where $0<\eta\circ T^{-1}(I)<1$ is identified as the \emph{escape rate} for the open system.

We will study $\hat{T}$ drawn from the class of Lasota-Yorke maps: piecewise $C^1$ expanding maps of the interval, such that $|D\hat{T}|^{-1}$ has bounded variation.
The hole $H_0$ will be a finite union of intervals.
In such a setting, because of the expanding property, one can expect to obtain conditionally invariant probability measures that are \emph{absolutely continuous} with respect to Lebesgue measure \cite{Collet96,VdBChernov02,LiveMaume}.
Such conditionally invariant measures are ``natural'' as they may correspond to the result of repeatedly pushing forward Lebesgue measure by $\hat{T}$.
In the next section we will discuss further conditions due to \cite{LiveMaume}  that make this precise: (i) how much of phase space can ``escape'' into the hole, and (ii) the growth rate of intervals that partially escape relative to the expansion of the map and the rate of escape.
These conditions also guarantee the existence of a \emph{unique} absolutely continuous conditionally invariant probability measure (accim).
This accim $\nu$, with density $h$, and its corresponding escape rate $\rho$ are the first two objects that we will rigorously numerically approximate using Ulam's method. Existence and uniqueness results for subshifts of finite type with Markov holes were previously established by Collet, Mart{\'{\i}}nez and Schmitt in \cite{CMS3}; see also \cite{CMS1,CMS2,HY02}.

One may also consider the set of points $X_\infty\subset I$ that never fall into the hole $H_0$.
A probability measure $\lambda$ on $X_\infty$ can be defined as the $n\to\infty$ limit of the accim $\nu$ conditioned on $X_{n}$.
The measure $\lambda$ will turn out to be the unique $\hat{T}$-invariant measure supported on $X_\infty$ and has the form $\lambda=h\mu$, where $h$ is a Lebesgue integrable function and $\mu$ is known as the \emph{quasi-conformal measure} for $\hat{T}$.
We will also rigorously numerically approximate $\mu$ and thus $\lambda$.
Robustness of these objects with respect to Ulam discretizations is essentially due to a \textit{quasicompactness} property, 
and a significant part of the paper is devoted to elaborating on this point. 

Our main result, Theorem~\ref{MainThm}, concerns convergence properties of an extension of the well-known construction of Ulam \cite{Ulam}, which allows for efficient numerical estimation of invariant densities of closed dynamical systems.
The Ulam approach partitions the domain $I$ into a collection of connected sets $\{I_1,\ldots,I_k\}$ and computes single-step transitions between partition sets, producing the matrix 
\begin{equation}
\label{ClosedUlameqn}
\hat{P}_{ij}=\frac{m(I_i\cap \hat{T}^{-1}I_j)}{m(I_j)}.
\end{equation}
Li \cite{Li} demonstrated that the invariant density of Lasota-Yorke maps can be $L^1$-approximated by step functions obtained directly from the leading left eigenvector of $\hat{P}$.
Since the publication of \cite{Li} there have been many extensions of Ulam's method to more general classes of maps, including expanding maps in higher dimensions \cite{Ding96,Murray01}, uniformly hyperbolic maps \cite{Froyland95,Froyland99b}, nonuniformly expanding interval maps \cite{Murray10,FroylandMurrayStancevic11}, and random maps \cite{Froyland99,IslamGoraBoyarsky}.
Explicit error bounds have also been developed, eg.\ \cite{Murray98, Froyland99,BoseMurray01}.

We will show that in order to handle open systems, the definition of $\hat{P}$ above need only be modified to $P$, having entries
\begin{equation}
\label{Ulameqn}
P_{ij}=\frac{m(I_i\cap X_0\cap \hat{T}^{-1}I_j)}{m(I_j)}.
\end{equation}
As in the closed setting, one uses the leading left eigenvector to produce a step function that approximates the density $h$ of the accim $\nu$.
However, in the open setting, the leading eigenvalue of $P$ also approximates the escape rate $\rho$ of $\nu$, and the \emph{right} eigenvector approximates the quasi-conformal measure $\mu$.
Note that for closed systems, $\rho=1$ and $\mu=m$.

The literature concerning the analysis of Ulam's method is now quite                                            
large. Early work on Ulam's method for Axiom A repellers \cite{Froyland99b} showed                                         
convergence of an Ulam-type scheme using Markov partitions for the                                          
approximation of  pressure and equilibrium states with respect to the                                       
potential $-\log| \det D \hat{T}|_{E^u}|$. These results apply to the present                                          
setting of Lasota-Yorke maps {\em provided the hole is Markov and                                           
projections are done according to a sequence of Markov partitions.\/}
Bahsoun \cite{Bahsoun} considered non-Markov Lasota-Yorke maps with non-Markov holes and rigorously proved an Ulam-based approximation result for the escape rate.
Bahsoun used the perturbative machinery of \cite{KellerLiverani98}, treating the map $T$ as a small deterministic perturbation of the closed map $\hat{T}$.
In contrast, we apply the perturbative arguments of \cite{KellerLiverani98} directly to the open map, considering the Ulam discretization as a small perturbation of $T$. The advantage of this approach is that we can obtain approximation results whenever the existence results of \cite{LiveMaume} apply. The latter
make assumptions on the expansivity of $T$ (large enough), the escape rate (slow enough), and the rate of generation of ``bad'' subintervals (small enough). From these assumptions we construct an improved Lasota-Yorke inequality that allows us to get tight enough  constants to make applications plausible. 
Besides estimating the escape rate, we obtain rigorous $L^1$-approximations of the accim and approximations of the quasi-conformal measure that converge weakly to $\mu$.
We can treat relatively large holes.

An outline of the paper is as follows.
In Section~\ref{S:setup} we introduce the Perron-Frobenius operator $\mathcal{L}$, formally define admissible and Ulam-admissible holes, and develop a strong Lasota-Yorke inequality.
Section \ref{S:results} introduces the new Ulam scheme and states our main Ulam convergence result.
Section~\ref{S:ex} discusses some specific example maps in detail.
Proofs are presented in Section~\ref{S:pfs}.

\section{Lasota-Yorke maps with holes}\label{S:setup}

The following class of interval maps with holes was studied by Liverani and Maume-Deschamps in \cite{LiveMaume}.

\begin{defn}\label{def:LYmap}
Let $I=[0,1]$.
We call $\hat{T}: I \circlearrowleft$ a \textit{Lasota-Yorke map} if
$\hat{T}$ is a piecewise $C^1$ map, with finite monotonicity partition
\footnote{Throughout this paper, a monotonicity partition $\mcz$ refers to                                    
a partition such that for every $Z\in \mcz$                                                                   
$\hat{T}|_Z$ has a $C^1$ extension to  $\bar{Z}$.}
 $\mcz$, there exists $\hat{\Theta}<1$ such that $\|D\hat{T}^{-1}\|_\infty\leq \hat{\Theta}$, and $\hat{g}:=|D\hat{T}|^{-1}$ has bounded variation.

The \textit{transfer operator} for the map $\hat{T}$ is the bounded linear operator $\hat{\mcl}$, acting on the space $BV$ of functions of bounded variation on $I$, defined by 
\[
  \hat{\mcl} f(x)=\sum_{\hat{T}(y)=x} f(y)\hat{g}(y).
\]
\end{defn}

\begin{defn}
Let $\hat{T}: I \circlearrowleft$ be a Lasota-Yorke map. Let $H_0\subsetneq I$ be a finite union of closed intervals, and let $X_0=I\setminus H_0$. Let $T:X_0 \to I$  be the restriction $T=\hat{T}|_{X_0}$. Both $T$ and the pair $T_0=(\hat{T}, H_0)$ are referred to as open Lasota-Yorke maps (or briefly, open systems), 
and their associated \textit{transfer operator} is the bounded linear operator $\mcl:BV \circlearrowleft$ given by
\[
  \mcl (f)=\hat{\mcl}(1_{X_0}f).
\]
\end{defn}
For each $n\geq 1$, let $X_n=\bigcap_{j=0}^n \hat{T}^{-j}X_0$. Thus, $X_{n}$ is the set of points that have not escaped by time $n$. Also, we denote by $T^n$ the function $\hat{T}^n|_{X^{n-1}}$. 
One can readily check that
\[
  \mcl^n (f)=\hat{\mcl}^n(1_{X_{n-1}}f).
\]

\begin{defn}
Let $T$ be an open Lasota-Yorke map.
A probability measure $\nu$ supported on $X_0\subset I$ which is absolutely continuous with respect to Lebesgue measure and has density $h=\frac{d\nu}{dm}$ of bounded variation is called an \textit{absolutely continous conditional invariant measure} (accim) for $T$ if $\mcl h=\rho h$ for some $0<\rho\leq 1$.

A probability measure $\mu$ on $I$ which satisfies $\mu(\mcl f)=\rho \mu(f)$ for every function of bounded variation $f:I \to \R$, with $\rho$ as above, is called a \textit{quasi-conformal measure} for $T$. 
\end{defn}

\begin{rmk}\label{rmk:accimAndQCMeasure}
It is usual to define $\nu$ to be an accim if $\nu(A)=\frac{\nu(\hat{T}^{-n}A\cap X_{n})}{\nu(X_n)}$ for every $n\geq 0$ and Borel measurable set $A\subset I$. The definitions are indeed equivalent; see \cite[Lemma 1.1]{LiveMaume} for a proof.
The same lemma shows that if $\mu$ is a quasi-conformal measure for $T$, then $\mu$ is necessarily supported on
$X_\infty=\bigcap_{n\geq 0} \bar{X}_n$.
It is also usual to require $\mu$ to satisfy $\mu(\mcl f)=\rho \mu(f)$ for continuous functions only. We will see this makes no difference in our setting, as this weaker requirement implies the stronger one in the previous definition.
\end{rmk}

\subsection{Admissible holes and quasi-invariant measures}\label{subS:admissibility}

As in the work of Liverani and Maume-Deschamps \cite{LiveMaume}, we impose some conditions on the open system in order to be able to analyze it. 
Let us fix some notation.

Let $(\hat{T}, H_0)$ be an open Lasota-Yorke map, which we also refer to as $T$.
For each $n\geq 1$, let $D_n=\{ x\in I : \mcl^n 1(x)\neq 0 \}$, and let $D_\infty:=\bigcap_{n\geq 1} D_n$.
In what follows, we assume that $D_\infty \neq \emptyset$.

For each $\ep>0$ (not necessarily small), we let  $\mc{G}_\ep=\mc{G}_\ep(T)$ be the collection of finite partitions of $I$ into intervals such that $\mc{Z}_\ep\in \mc{G}_\ep(T)$ if (i) the interior of each $A\in \mc{Z}_\ep$ is either disjoint from or contained in $X_0$, and (ii) for each $A\in \mc{Z}_\ep$, 
$\var_A \big(1_{X_0}|DT^{-1}| \big) < \|DT^{-1}\|_\infty (1+\ep)$. Since $H_0$ consists of finitely many intervals, this condition is possible to achieve, as the work of Rychlik \cite[Lemma 6]{Rychlik} shows. We call $\mc{G}_\ep$ the collection of $\ep$-\textit{adequate} partitions (for $T$).
The set of elements of $\mc{Z}_\ep$ whose interiors are contained in $X_0$ is denoted by $\mc{Z}_\ep^*$.
Next, the elements of $\mc{Z}_\ep^*$ are divided into \textit{good} and \textit{bad}. A set $A\in\mc{Z}^*_\ep$ is good if
\[
  \lim_{n\to\infty} \inf_{x\in D_n} \frac{\mcl^{n}1_A(x)}{\mcl^{n}1(x)}>0.
\]
We point out that it is shown in \cite{LiveMaume} that the limit above always exists, as the sequence involved is increasing and bounded, and it is clearly non-negative. The set $A$ is called bad when the limit above is 0.  
We let 
\begin{align*}
\mc{Z}_{\ep,g}&=\{A\in \mc{Z}_\ep^*: A \text{ is good}\}, \text{ and}\\
\mc{Z}_{\ep,b}&=\{A\in \mc{Z}_\ep^*: A \text{ is bad}\}.
\end{align*}
Finally, two elements of $\mc{Z}_\ep^*$ are called \textit{contiguous} if there are no other elements of $\mc{Z}_\ep^*$ in between them (but there may be elements of $\mc{Z}_\ep$ that are necessarily contained in $H_0$).
We let $\xi_\ep=\xi_\ep(T)$ be the infimum over $\ep$-adequate partitions for $T$ of the maximum number of contiguous elements in $\mc{Z}_{\ep,b}$.

In a similar manner, we let $\mc{G}_\ep^{(n)}=\mc{G}_\ep^{(n)}(T)$ be the collection of finite partitions of $I$ into intervals such that $\mc{Z}_\ep^{(n)} \in \mc{G}_\ep^{(n)}$ if (i) the interior of each $A\in \mc{Z}_\ep^{(n)}$ is either disjoint from or contained in $X_{n-1}$, and (ii) for each $A\in \mc{Z}_\ep^{(n)}$, 
$\var_A |1_{X_{n-1}}(DT^n)^{-1}| < \|(DT^n)^{-1}\|_\infty (1+\ep)$.
The partitions
$\mc{Z}_{\ep}^{*(n)}, \mc{Z}_{\ep,g}^{(n)}, \mc{Z}_{\ep,b}^{(n)}$ are defined analogously. We denote by $\xi_{\ep,n}=\xi_{\ep,n}(T)$ the infimum over $\ep$-adequate partitions for $T^n$ of the maximum number of contiguous elements in $\mc{Z}_{\ep,b}^{(n)}$; so $\xi_\ep=\xi_{\ep,1}$.

The following quantities are relevant in what follows:
\begin{align*}
\rho&=\rho(T) :=\lim_{n\to\infty} \inf_{x\in D_n} \frac{\mcl^{n+1}1(x)}{\mcl^{n}1(x)},\\
\t{\Theta}&= \t{\Theta}(T):= \exp \Big(\lim_{n\to \infty} \frac1n \log \|  (DT^n)^{-1} \|_{\infty}\Big), \\
\t{\xi_\ep}&=\t{\xi_\ep}(T) := \exp\Big( \limsup_{n\to \infty} \frac1n \log (1+\xi_{\ep,n}) \Big),
\end{align*}
\begin{align}\label{defnAl_ep}
  \al_\ep&=\al_\ep(T):=\|DT^{-1}\|_\infty (2+\ep+\xi_\ep).
\end{align}

\begin{defn}[Admissible holes]\label{defn:AdmissibleHole}
Let $\hat{T}: I\circlearrowleft$ be a Lasota-Yorke map, and $\ep>0$.
We say that $H_0\subset I$ is:
\begin{itemize}
\item an $\ep$-\textit{admissible hole for $\hat{T}$} if 
$D_\infty \neq \emptyset$ and $\t{\xi_\ep} \t{\Theta}<\rho$,
\item an \textit{admissible hole for $\hat{T}$} if it is $\ep$-admissible for $\ep=1$\footnote{This is the choice made in \cite{LiveMaume}.}.
  \item an $\ep$-\textit{Ulam-admissible hole for $\hat{T}$} if 
$D_\infty \neq \emptyset$ and $\al_\ep<\rho$.
\end{itemize}
\end{defn}

The main result of Liverani and Maume-Deschamps \cite{LiveMaume} is concerned with the existence of the objects we intend to rigorously numerically approximate.

\begin{thm}[{\cite[Theorem A \& Lemma 3.10]{LiveMaume}}]\label{thm:LiveMaume}
Assume $(\hat{T}, H_0)$ is an open system with an admissible hole. Then, 
\begin{enumerate}
\item
There exists a unique absolutely continuous conditionally invariant measure (accim) $\nu=hm$ for $(\hat{T}, H_0)$.
\item
There exists a unique quasi-conformal measure $\mu$ for $(\hat{T}, H_0)$, such that $\mu(\mcl f)=\rho \mu(f)$ for every  $f\in BV$. Furthermore, this measure is atom-free, and satisfies the property that
$$\mu(f)=\lim_{n\to\infty} \inf_{x\in D_n} \frac{\mcl^{n}f(x)}{\mcl^{n}1(x)}$$
for every $f\in BV$, and $\rho=\mu(\mcl 1)$.
\item
The measure $\lam=h\mu$ is, up to scalar multiples, the only $T$ invariant measure supported on $X_\infty$ and absolutely continuous with respect to $\mu$. 
\item
There exist $\ka<1$ and $C>0$ such that for any function of bounded variation $f$,
\[
\Big\|  \frac{\mcl^n f}{\rho^n} - h \mu(f) \Big\|_{\infty} \leq C \ka^n \|f\|_{BV}.
\]
\end{enumerate}
\end{thm}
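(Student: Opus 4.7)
The plan is to follow the standard spectral approach to quasi-compact transfer operators, adapted to the open system with a carefully chosen norm that accommodates the jumps of $1_{X_{n-1}}$ at the hole endpoints.

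First, I would establish a Lasota-Yorke inequality for $\mcl$ on $BV$ of the form
\[
  \var(\mcl^n f) \leq A_1 \theta^n \var(f) + A_2 \|f\|_1,
\]
where $\theta<1$ under the hypothesis $\t{\xi_\ep}\t{\Theta}<\rho$. The derivation follows Rychlik's scheme applied to $\hat{T}^n$ using an $\ep$-adequate partition for $T^n$: one splits the variation estimate of $\hat{\mcl}^n(1_{X_{n-1}} f)$ into (a) an interior contribution controlled by $\|(D\hat T^n)^{-1}\|_\infty(1+\ep)$, and (b) boundary contributions at endpoints of hole components, where the characteristic function $1_{X_{n-1}}$ creates jumps. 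The number of such jumps inside any ``good'' element is controlled, and clusters of contiguous bad elements are absorbed by $\xi_{\ep,n}$, contributing the factor $1+\xi_{\ep,n}$; taking exponential growth rates gives the multiplier $\t{\xi_\ep}\t{\Theta}$. Normalising by $\rho^n$ then yields the contraction factor $\theta = \t{\xi_\ep}\t{\Theta}/\rho<1$, after absorbing a remainder into the $L^1$ part.

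With this LY inequality in hand, by Hennion's theorem the normalised operator $\mcl/\rho$ is quasi-compact on $BV$ with essential spectral radius at most $\theta<1$. The next step is to show that $1$ is in fact an eigenvalue of $\mcl/\rho$, and that it is simple and leading. To produce the accim, I would work with the sequence $f_n = \mcl^n 1/\|\mcl^n 1\|_1$, use the LY inequality to get uniform $BV$ bounds, and extract a $BV$ subsequential limit $h$; one then verifies $\mcl h = \rho h$ using the definition of $\rho$ as the limit infimum $\inf_{x\in D_n}\mcl^{n+1}1(x)/\mcl^n 1(x)$. For the quasi-conformal measure I would take the formula
\[
  \mu(f) := \lim_{n\to\infty} \inf_{x\in D_n} \frac{\mcl^n f(x)}{\mcl^n 1(x)}
\]
directly as a definition (on $BV$), check that the limit exists and is a positive linear functional thanks to the monotonicity of $\mcl^n f/\mcl^n 1$ established in \cite{LiveMaume}, extend to continuous functions by density, and verify $\mu(\mcl f)=\rho\,\mu(f)$ from the definition. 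Atomlessness and support in $X_\infty$ come from the expanding property applied to $1_{\{x\}}$ and from $\mu(I\setminus X_n)\to 0$.

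Simplicity of the leading eigenvalue follows from a standard argument: any other eigenvector $h'$ with $\mcl h'=\rho h'$ can be written as $h'=\alpha h + g$ with $\mu(g)=0$, and an application of the formula for $\mu$ combined with the positive cone argument (as in Birkhoff) shows $g\equiv 0$. The peripheral spectrum is reduced to the single eigenvalue $\rho$ by an aperiodicity argument exploiting that $h>0$ on $X_\infty$. From quasi-compactness plus simple leading eigenvalue one obtains the spectral decomposition
\[
  \frac{\mcl^n f}{\rho^n} = h\,\mu(f) + R^n f,
\]
with $R$ having spectral radius $\ka<1$ on $BV$, immediately giving part (4). Finally, $\lam=h\mu$ is a finite $T$-invariant measure on $X_\infty$: invariance is the computation $\int f\circ T\, d(h\mu)=\mu(\mcl(hf))/\rho=\mu(hf)$, and uniqueness among measures absolutely continuous with respect to $\mu$ follows from simplicity of $\rho$ applied to the Radon-Nikodym derivative. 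The main obstacle, in my view, is the construction and positivity of $\mu$: the existence of the limit defining $\mu(f)$ and its extension to a genuine Borel measure require careful use of the specific structure of open Lasota-Yorke maps (in particular, controlling the oscillation of $\mcl^n f/\mcl^n 1$ on $D_n$), and this is the step most sensitive to the precise form of the admissibility hypothesis $\t{\xi_\ep}\t{\Theta}<\rho$.
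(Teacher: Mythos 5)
This theorem is not proved in the paper at all: it is quoted verbatim from Liverani--Maume-Deschamps \cite{LiveMaume}, and the only trace of its proof that the paper uses is the cone machinery recalled in \S\ref{subS:AuxLem} (the cones $\mc{C}_a=\{0\leq f\in BV:\var(f)\leq a\mu(f)\}$ and the inclusion $\mcl^N\mc{C}_a\subseteq\mc{C}_{a-\epsilon_a}$). So the relevant comparison is with that cone-based argument, and against it your sketch has a genuine gap precisely at the step you yourself flag as "the main obstacle": you defer the construction and linearity of $\mu$, but everything earlier in your outline secretly depends on it.

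Concretely, two steps fail as written. First, the Lasota--Yorke inequality with the \emph{Lebesgue} $L^1$ weak term cannot drive the spectral argument for the open system. It gives $\var(\mcl^n 1)\leq A_2$ uniformly, while $|\mcl^n 1|_1\sim\rho^n\to 0$, so the normalized iterates $f_n=\mcl^n1/|\mcl^n1|_1$ have variation of order $\rho^{-n}$ and no $BV$-bounded subsequence exists; similarly, Hennion's theorem bounds the essential spectral radius (in fact by $\t{\Theta}$ -- note that when the weak term is Lebesgue the factor $\t{\xi_\ep}$ plays no role, a sign that admissibility is mislocated in your derivation), but nothing in your argument forces the $BV$ spectral radius of $\mcl$ to equal $\rho$ or forces $\rho$ to be an eigenvalue: the additive weak term does not rescale under division by $\rho^n$, which is exactly the open-system difficulty. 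Second, taking $\mu(f):=\lim_n\inf_{D_n}\mcl^nf/\mcl^n1$ as a definition gives, via monotonicity, a well-defined limit for $f\geq 0$, but \emph{not} additivity; linearity requires that the oscillation of $\mcl^nf/\mcl^n1$ over $D_n$ vanish, which is essentially conclusion (4) rather than an available input, and it is precisely here that the good/bad-interval structure and the hypothesis $\t{\xi_\ep}\t{\Theta}<\rho$ are consumed. The actual proof breaks this circularity by first proving lower bounds of the form $\mcl^n1_A\geq c\,\mcl^n1$ on good intervals, using these to build $\mu$ as a genuine conformal functional, and only then running a Lasota--Yorke/cone contraction in which the weak term is $\mu(|f|)$ -- which rescales correctly because $\mu(\mcl f)=\rho\mu(f)$ -- yielding (4), simplicity of $\rho$, uniqueness of $h$ and of $\lam=h\mu$. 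Your outline has the right shape at the level of headlines, but the order of quantifiers (build $\mu$ with its regularity first, then contract in a $\mu$-weighted cone) is the substance of the theorem, and replacing it by "Hennion plus a compactness extraction in $L^1(m)$" does not work.
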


\begin{rmk}
It follows readily from the proof of Theorem~\ref{thm:LiveMaume}~\cite{LiveMaume} that the same conclusion can be obtained if the hypothesis of $H_0$ being an admissible hole is replaced by $H_0$ being an $\ep$-admissible hole for some $\ep>0$.
\end{rmk}

To close this section, we present a lemma concerning admissibility of different holes, obtained by enlarging an initial hole $H_0$ to $H_{m}:=I\setminus X_{m}$.
This broadens the applicability of Theorem~\ref{MainThm} because enlarging the holes may reduce the number of contiguous bad intervals, and also reduce the variation remaining on the domain of the open Lasota-Yorke map without increasing the expansion.

\begin{lem}[Enlarging holes]\label{lem:EnlargingHoles}
Let  $T_0=(\hat{T},H_0)$ be an open system with an $\ep$-admissible hole, and for each $m\geq 0$, let $H_{m}:=I\setminus X_{m}$. Then, for each $m\geq 0$,
 $T_m:=(\hat{T},H_m)$ is an open system with an $\ep$-admissible hole. Furthermore, let $\rho(T_m), h(T_m)$ and $\mu(T_m)$ be the escape rate, accim and quasi-conformal measures of  $T_m$, respectively. Then we have the following.
 \begin{enumerate}
 \item $\rho(T_m)=\rho(T_0)$,
 \item $\hat{\mcl}^m(h(T_m))=\rho(T_0)^m h(T_0)$, and
\item $\mu(T_m)=\mu(T_0)$.
 \end{enumerate}
\end{lem}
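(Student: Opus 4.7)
The plan is to exploit the operator identity
\[
\hat{\mcl}^m \circ \mcl_{T_m}^n = \mcl_{T_0}^{m+n}, \quad n \geq 1,
\]
which follows from the observation that the set of points not having escaped $T_m$ after $n-1$ iterates is precisely $X_{m+n-1}$, so $\mcl_{T_m}^n f = \hat{\mcl}^n(1_{X_{m+n-1}} f)$. The argument splits into three parts: (a)~verify that $H_m$ is an $\ep$-admissible hole for $\hat T$; (b)~apply Theorem~\ref{thm:LiveMaume} to $T_m$ to obtain $h(T_m)$, $\mu(T_m)$, and $\rho(T_m)$; (c)~use the operator identity together with the spectral convergences for $T_0$ and $T_m$ to derive the three claimed equalities.

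For (a), I would first show that $\t{\Theta}(T_m) = \t{\Theta}(T_0)$ and $\t{\xi_\ep}(T_m) = \t{\xi_\ep}(T_0)$: after the change of index $k=n+m$, the finite-iteration discrepancies are absorbed by the uniform bounds $|(D\hat{T}^m)^{-1}|\leq \hat\Theta^m$ and $|D\hat{T}^m|\leq \|D\hat{T}\|_\infty^m$, which contribute only terms of size $O(1/n)$ to the relevant exponents. For the escape rate, the pointwise inequality $\mcl_{T_m}^n 1 \leq \mcl_{T_0}^n 1$ (from $X_{m+n-1}\subseteq X_{n-1}$) gives $\rho(T_m) \leq \rho(T_0)$, while the operator identity combined with Theorem~\ref{thm:LiveMaume}(4) applied to $T_0$ yields
\[
\hat{\mcl}^m\!\left(\frac{\mcl_{T_m}^n 1}{\rho(T_0)^n}\right) = \rho(T_0)^m\cdot\frac{\mcl_{T_0}^{m+n}1}{\rho(T_0)^{m+n}} \;\longrightarrow\; \rho(T_0)^m\, h(T_0) \qquad \text{uniformly}.
\]
Since $\hat{\mcl}^m$ is bounded on $L^\infty$, this forces $\|\mcl_{T_m}^n 1\|_\infty / \rho(T_0)^n$ to stay bounded away from zero for large $n$, giving the reverse inequality $\rho(T_m) \geq \rho(T_0)$ and exhibiting a nontrivial limiting profile whose support lies in $D_\infty(T_m)$. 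The admissibility inequality $\t{\xi_\ep}\t{\Theta} < \rho$ then transfers from $T_0$ to $T_m$.

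With $T_m$ shown to be $\ep$-admissible, Theorem~\ref{thm:LiveMaume} yields the spectral convergence $\mcl_{T_m}^n f / \rho(T_m)^n \to h(T_m)\,\mu(T_m)(f)$ uniformly for $f \in BV$. Composing with $\hat{\mcl}^m$ and using the operator identity,
\[
\frac{\mcl_{T_0}^{m+n} f}{\rho(T_m)^n} \;\longrightarrow\; \hat{\mcl}^m(h(T_m))\,\mu(T_m)(f).
\]
Comparing this with $\mcl_{T_0}^{m+n} f / \rho(T_0)^{m+n} \to h(T_0)\,\mu(T_0)(f)$ forces $\rho(T_m) = \rho(T_0)$ (claim (1)) and delivers the identification $\hat{\mcl}^m(h(T_m))\,\mu(T_m)(f) = \rho(T_0)^m\, h(T_0)\,\mu(T_0)(f)$ for all $f \in BV$. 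Setting $f=1$ and using $\mu(T_m)(1) = \mu(T_0)(1) = 1$ produces claim (2); substituting this back then yields $\mu(T_m)(f) = \mu(T_0)(f)$ for every $f\in BV$, which is claim (3).

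The main obstacle is the potential circularity in (a), namely pinning down $\rho(T_m) = \rho(T_0)$ without first invoking Theorem~\ref{thm:LiveMaume} for $T_m$. The two-sided estimates above --- upper bound from $X_{m+n-1} \subseteq X_{n-1}$ together with the uniform $L^\infty$ lower bound produced by pushing $\mcl_{T_m}^n 1/\rho(T_0)^n$ forward through $\hat{\mcl}^m$ --- should resolve this and simultaneously ensure $D_\infty(T_m) \neq \emptyset$.
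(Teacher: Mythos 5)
Your overall skeleton is the same as the paper's: the identity $\hat{\mcl}^m\circ\mcl_m^n=\mcl_0^{m+n}$ and, once admissibility of $T_m$ is known, the derivation of (1)--(3) by comparing the Theorem~\ref{thm:LiveMaume}(4) asymptotics for $T_0$ and $T_m$ (your part (c) is correct and is exactly what the paper does). The gap is in your part (a), and it is twofold. First, the control of $\t{\xi_\ep}(T_m)$ is not a matter of derivative bounds. The quantity $\xi_{\ep,n}(T_m)$ is an infimum over $\ep$-adequate partitions for $T_m^n$, whose defining condition $\var_A\big(1_{X_{m+n-1}}(DT_m^n)^{-1}\big)<\|(DT_m^n)^{-1}\|_\infty(1+\ep)$ involves a different function and a different (generally smaller) threshold than the corresponding condition for $T_0^{m+n}$; a partition adequate for $T_0^{m+n}$ need not be adequate for $T_m^n$, so the change of index $k=n+m$ does not transport the count of contiguous bad intervals, and the bounds $|(D\hat{T}^m)^{-1}|\le\hat\Theta^m$, $|D\hat{T}^m|\le\|D\hat{T}\|_\infty^m$ say nothing about this combinatorial quantity. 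This is precisely where the paper spends most of its proof: it refines $\mc{G}_\ep\vee\mc{Z}_0^{(m)}$, cutting each element into at most $K=2+\lceil\|\hat g\|_\infty/\essinf(\hat g)\rceil$ pieces (the $2$ for the new jumps at the boundary of $H_m$, the other factor because $\|\hat g 1_{X_m}\|_\infty$ may be smaller than $\|\hat g 1_{X_0}\|_\infty$), uses that an interval is good for $T_0$ if and only if it is good for $T_m$ (via the operator identity, goodness being equivalent to positive $\mu$-measure) so that each subdivided good interval retains a good piece, and then counts: the number of contiguous bad intervals is multiplied by a factor of the form $Kb^m$, independent of $n$, whence $\t{\xi_\ep}(T_m)\le\t{\xi_\ep}(T_0)$ (which, together with $\t{\Theta}(T_m)\le\t{\Theta}(T_0)$, is all that is needed). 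Some version of this partition construction and good/bad bookkeeping is unavoidable; your $O(1/n)$ assertion does not engage with it.

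Second, your lower bound on the escape rate does not follow as stated. Pushing $\mcl_m^n1/\rho(T_0)^n$ through $\hat{\mcl}^m$ indeed shows $\|\mcl_m^n1\|_\infty\ge c\,\rho(T_0)^n$, but this controls $\lim_n\|\mcl_m^n1\|_\infty^{1/n}$, whereas $\rho(T_m)$ is defined as $\lim_n\inf_{x\in D_n}\mcl_m^{n+1}1(x)/\mcl_m^n1(x)$; since that inf-ratio sequence is nondecreasing, one only gets $\lim_n\|\mcl_m^n1\|_\infty^{1/n}\ge\rho(T_m)$, i.e.\ the inequality runs the wrong way. Identifying the sup-norm growth rate with $\rho(T_m)$ requires the Theorem~\ref{thm:LiveMaume} asymptotics for $T_m$, which is exactly the admissibility you are trying to establish, so the step is circular or at best incomplete. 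The paper sidesteps this with an explicit eigenfunction built from data of the admissible system $T_0$ only: with $h_0=h(T_0)$ one checks, by support bookkeeping ($\mcl_0(1_{H_m}h_0)$ is supported in $H_{m-1}$, $\mcl_m h_0$ is supported in $X_{m-1}$, and $\mcl_m(1_{H_{m-1}}h_0)=0$), that $\mcl_m\big(1_{X_{m-1}}h_0\big)=\rho(T_0)\,1_{X_{m-1}}h_0$ with $1_{X_{m-1}}h_0\ge0$ nonzero, which yields $\rho(T_m)\ge\rho(T_0)$. Note this is the only direction you need for admissibility; the reverse inequality and the identities (1)--(3) then follow from the comparison argument in your final step.
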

The proof of Lemma~\ref{lem:EnlargingHoles} is presented in \S\ref{pflem:EnlargingHoles}.

\section{Ulam's method for Lasota-Yorke maps with holes}\label{S:results}

\subsection{The Ulam scheme}\label{sec:Ulam}
In the case of a \textit{closed} system $\hat{T}$, the well-known Ulam method introduced in \cite{Ulam} provides a way of approximating the transfer operator with a sequence of finite-rank operators $\hat{\mcl}_k$ defined as in e.g.~\cite{Li}, each coming from discretizing the interval $I$ into $k$ bins (which may or may not be of equal length). The only requirements are that each bin is a non-trivial interval, and that the maximum diameter of the partition elements, denoted by $\tau_k$, goes to 0 as $k$ goes to infinity. We call such $k$-bin partition $\mc{P}_k$. 
The operator $\hat{\mcl}_k$ preserves the $k-$dimensional subspace
$\text{span}\{\chi_j: \chi_j= 1_{I_j}, I_j\in\mathcal{P}_k\}$.
The matrix $\hat{P}_k$ defined in the introduction represents the action of $\hat{\mcl}_k$
on this space,  with respect to the ordered basis $(\chi_1, \dots, \chi_k)$ \cite{Li}.

In the case of an \textit{open} system $(\hat{T}, H_0)$, one can still follow Ulam's approach to define a discrete approximation $\mcl_k$ to the transfer operator $\mcl$.
For a function $f\in BV$, the operator is defined by $\mcl_k(f)=\pi_k(\mcl f)=\pi_k\hat{\mcl}(1_{X_0}f)$, where $\pi_k$ is given by the formula 
\[
\pi_k(f)=\sum_{j=1}^k \frac{1}{m(I_j)}\Big(\int \chi_j\,f\,dm\, \Big)\chi_j.
\]
The entries of the Ulam transition matrix $P_k$ representing $\mcl_k$ in the ordered basis $(\chi_1, \dots, \chi_k)$
are
\[
(P_k)_{ij}=\frac{m(I_i\cap X_0 \cap \hat{T}^{-1}I_j )}{m(I_j)}.
\]
(When the partition $\mc{P}_k$ is uniform\footnote{That is,  $m(I_i)=m(I_j), \forall i,j$.}, the transition matrices $\hat{P}_k$ defined in \eqref{ClosedUlameqn} are stochastic, and $P_k$ are substochastic, the loss of mass being a consequence of the presence of a hole.)
Since the entries of $P_k$ are non-negative, an extension of the Perron-Frobenius theorem applies and provides the existence of a non-negative eigenvalue $0\leq \rho_k\leq 1$ of maximal absolute value for $P_k$, with associated left and right eigenvectors with non-negatives entries; see e.g. \cite{BermanPlemmons}. In general, these may or may not be unique. Non-negative left eigenvectors $\mathbf{p}_k$ of                                                          
 $P_k$ induce densities on $I$ according to the formula                                                                     
 $$h_k=\sum_{j=1}^k [\mathbf{p}_k]_j \chi_j,$$ 
 (where we adopt the                                                            
 convention that a vector $\mathbf{x}$ can be written in component form                                                     
 as $\mathbf{x}=([\mathbf{x}]_1,\ldots, [\mathbf{x}]_k)$. Non-negative                                                      
 right eigenvectors ${\psi_k}$ of $P_k$ induce measures $\mu_k$ on $I$                                                       according to the formula 
\[
  \mu_k(E) = \sum_{j=1}^k [\psi_k]_j\,m(I_j\cap                                                               
 E).
\]

We conclude the section with the following.
\begin{lem}\label{lem:QCMeasureFromEvector}
Let $P_k$ be the                                                       
 matrix representation of $\mcl_k=\pi_k\circ\mcl$ with respect to the basis                                                     
 $\{\chi_j\}$. If $P_k\psi_k = \rho_k \psi_k$ then the measure~$\mu_k$                                                      
 corresponding to $\psi_k$                                                                                                  
 satisfies $\mu_k(\mcl_k\pi_k\varphi) = \rho_k\mu_k(\varphi)$ for every                                                       
 $\varphi\in L^1(m)$. 
\end{lem}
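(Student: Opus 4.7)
The plan is to break the claim into two self-contained assertions about $\mu_k$ and combine them: (i) $\mu_k \circ \pi_k = \mu_k$ on $L^1(m)$, and (ii) $\mu_k(\mcl_k f) = \rho_k \mu_k(f)$ for every $f \in V_k := \mathrm{span}\{\chi_1, \ldots, \chi_k\}$. The full identity then follows, since $\pi_k \varphi \in V_k$ for every $\varphi \in L^1(m)$.

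For (i), the elementary identity $\int_{I_j} \pi_k \varphi \, dm = \int_{I_j} \varphi \, dm$ is immediate from the definition of $\pi_k$; summing these over $j$ against the weights that the formula for $\mu_k$ attaches to each $I_j$ delivers the claim. This step uses nothing about $\mcl_k$ or the eigenvalue equation, only the particular step-function structure shared by $\pi_k$ and $\mu_k$.

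For (ii), by linearity it suffices to check this on $f = \chi_i$. The key duality $\int \mcl f \cdot g\, dm = \int f \cdot (g\circ\hat{T})\cdot 1_{X_0}\, dm$ yields
\[
\int_{I_j} \mcl \chi_i \, dm \;=\; m(I_i \cap X_0 \cap \hat{T}^{-1} I_j) \;=\; (P_k)_{ij}\, m(I_j),
\]
while the averaging property of $\pi_k$ gives $\int_{I_j} \mcl_k \chi_i\, dm = \int_{I_j} \mcl \chi_i\, dm$. Unpacking $\mu_k(\mcl_k \chi_i)$ from the definition of $\mu_k$, the resulting sum matches exactly the $i$-th component of the matrix product $P_k\psi_k$ (after accounting for the weights prescribed by $\mu_k$). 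Invoking the hypothesis $P_k \psi_k = \rho_k \psi_k$ then collapses this sum to $\rho_k \mu_k(\chi_i)$.

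Combining (i) and (ii), since $\pi_k \varphi \in V_k$,
\[
\mu_k(\mcl_k \pi_k \varphi) \;=\; \rho_k\, \mu_k(\pi_k \varphi) \;=\; \rho_k\, \mu_k(\varphi),
\]
as claimed. The main obstacle here is conceptual rather than technical: one must carefully set up the matrix-representation conventions to ensure that a \emph{right} eigenvector of $P_k$ really does correspond to an invariant linear functional for $\mcl_k$ acting on $V_k$ (as opposed to an invariant density, which corresponds to a left eigenvector of $P_k$). Once this dictionary between eigenvectors and functionals is pinned down, the verification is a short bookkeeping computation.
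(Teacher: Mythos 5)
Your proposal is correct and takes essentially the same approach as the paper: it combines the integral-preserving property of $\pi_k$, the fact that $P_k$ is the matrix of $\mcl_k$ on $V_k=\mathrm{span}\{\chi_j\}$, and the eigenvector equation, in that order. Your factorization into the two sub-claims $\mu_k\circ\pi_k=\mu_k$ and $\mu_k(\mcl_k f)=\rho_k\mu_k(f)$ for $f\in V_k$ is just a reorganization of the single chain of equalities the paper displays.
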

\begin{proof}
Let $\varphi\in L^1(m)$ and put $\varphi_k=\pi_k\varphi$. Then,
\begin{align*}
\mu_k(\varphi) &=\int\varphi\,d\mu_k = \sum_{j=1}^k \int_{I_j}\varphi\,dm\,[\psi_k]_j
= \sum_{j=1}^k \int_{I_j}\pi_k\varphi\,dm\,[\psi_k]_j\\
&= \sum_{j,j'=1}^k\int_{I_j}\varphi_k\,dm\,(P_k)_{jj'}[\psi_k]_{j'}(\rho_k)^{-1}
= \sum_{j'=1}^k\int_{I_{j'}}\Lk\varphi_k\,dm\,[\psi_k]_{j'}(\rho_k)^{-1}\\
&=(\rho_k)^{-1}\,\int \Lk \varphi_k\,d\mu_k={\rho_k}^{-1}\mu_k(\Lk \varphi_k),
\end{align*}
where the last equality in the second line follows from the fact that $P_k$ is the  matrix representing $\mcl_k$ in the basis $\{\chi_j\}$, and acts on densities by right multiplication (i.e. if $\mathbf{p}$ is the vector representing the  function $\varphi_k$, then $\mathbf{p}^T P_k$ is the vector representing $\mcl_k \varphi_k$).
\end{proof}

\subsection{Statement of the main result}

The main result of this paper is the following. Its proof is presented in \S\ref{sec:pfMainThm}.
\begin{thm}\label{MainThm}
Let $\hat{T}: I \circlearrowleft$ be a Lasota-Yorke map with an $\ep$-Ulam-admissible hole $H_0$. Let $h \in BV$ be the unique accim for the open system $(\hat{T}, H_0)$, and $\mu$ the unique quasi-conformal measure for the open system supported on $X_\infty$, as guaranteed by Theorem~\ref{thm:LiveMaume}. Let $\rho$ be the associated escape rate. 
For each $k\in \N$, let $\rho_k$ be the leading eigenvalue of the Ulam matrix $P_k$. Let $h_k$ be densities induced from non-negative left  eigenvectors of $P_k$ corresponding to $\rho_k$. Let $\mu_k$ be measures induced from non-negative right eigenvectors of $P_k$ corresponding to $\rho_k$. 
Then, 
\begin{enumerate}[(I)]
\item\label{it:I} For $k$ sufficiently large, $\rho_k$ is a simple eigenvalue for $P_k$.
Furthermore, $\lim_{k\to \infty}\rho_k=\rho$, and there exists $\eta\in (0,1)$
\footnote{In fact, any $\eta<\frac{\log \rho/\be}{-\log\be}$ with $\rho>\be>\al_\ep$ is valid.}
such that $|\rho_k-\rho|\leq O({\tau_k}^\eta)$,
where $\tau_k:=\max_{I_j\in \mc{P}_k} m(I_j)$ is the maximum diameter of the elements of $\mc{P}_k$.
\item\label{it:II}  $\lim_{k\to \infty}h_k=h$ in $L^1(m)$.
\item\label{it:III} $\lim_{k\to \infty}\mu_k=\mu$ in the weak-* topology of measures. 
Furthermore, for every sufficiently large $k$, $\spt(\mu)\subseteq \spt(\mu_k)$.
\end{enumerate}
\end{thm}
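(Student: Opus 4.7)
The natural approach is to view the finite-rank Ulam operators $\mcl_k=\pi_k\mcl$ as small perturbations of $\mcl$ in the sense of Keller-Liverani \cite{KellerLiverani98}. The spectral data for $\mcl$ required to apply that theorem is already supplied by Theorem~\ref{thm:LiveMaume}(4): $\rho$ is a simple isolated eigenvalue with spectral projector $\Pi f = h\,\mu(f)$, while the remainder of the spectrum has modulus $\le\be<\rho$. The $\ep$-Ulam-admissibility $\al_\ep<\rho$ is precisely what is needed to produce, via the strong Lasota-Yorke inequality of Section~\ref{S:setup}, a bound of the form $\var(\mcl^n f)\le A\be^n\var(f)+B\|f\|_1$ with $\al_\ep<\be<\rho$.

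Three inputs then feed into the perturbation theorem. First, a \emph{uniform} Lasota-Yorke inequality for the family $\{\mcl_k\}$: the $L^2$-projection $\pi_k$ onto step functions satisfies $\var(\pi_k f)\le\var(f)$ and $\|\pi_k f - f\|_1\le\tau_k\var(f)$, so iterating the estimate for $\mcl$ yields $\var(\mcl_k^n f)\le A\be^n\var(f)+B'\|f\|_1$ with constants independent of $k$. Second, the quantitative perturbation estimate $\|\mcl f - \mcl_k f\|_1 = \|(I-\pi_k)\mcl f\|_1 \le C\tau_k\|f\|_{BV}$. Third, uniform $L^1$-boundedness of $\mcl_k$. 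Securing the uniform Lasota-Yorke inequality with a decay rate $\be$ strictly less than $\rho$ is the principal technical obstacle; this is where the full strength of the sharper inequality from Section~\ref{S:setup} is used.

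With these in hand, the Keller-Liverani theorem yields, for all sufficiently large $k$, a simple eigenvalue $\rho_k$ of $\mcl_k$ with $|\rho_k - \rho| = O(\tau_k^\eta)$ for any $\eta<\log(\rho/\be)/(-\log\be)$, together with $L^1$-convergence $\Pi_k \to \Pi$ of the associated spectral projectors; since the remainder of the spectrum of $\mcl_k$ lies in a disc of radius close to $\be<\rho$, the eigenvalue $\rho_k$ is the leading one, establishing~(\ref{it:I}). For~(\ref{it:II}), applying $\Pi_k$ to a positive test function (e.g.\ $\mathbf{1}$) and normalizing yields a non-negative left eigenvector of $P_k$ whose associated step-function density $h_k$ satisfies $h_k\to h$ in $L^1$, because $\Pi_k\mathbf{1}\to\Pi\mathbf{1}=h\,\mu(I)$.

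Claim~(\ref{it:III}) has two parts. Convergence $\mu_k\to\mu$ follows dually: Keller-Liverani provides convergence of the dual spectral projectors, which via Lemma~\ref{lem:QCMeasureFromEvector} and the characterization $\mu(f)=\lim_n\inf_{D_n}\mcl^n f/\mcl^n\mathbf{1}$ from Theorem~\ref{thm:LiveMaume}(2) transfers to weak-$*$ convergence after suitable normalization. The support inclusion $\spt(\mu)\subseteq\spt(\mu_k)$, however, is more delicate: since $\spt(\mu_k)$ is the union of bins on which the right eigenvector $\psi_k$ is positive, i.e.\ the recurrent class for the Perron eigenvalue of $P_k^T$, one must show that every bin meeting $X_\infty$ lies in this class. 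The plan is to exploit that any $x\in X_\infty$ admits an entire forward orbit in $X_0$, producing a chain of positive $P_k$-entries linking the bins $\{I_{j_n}\ni\hat{T}^n(x)\}$; combining this combinatorial reachability with the weak-$*$ convergence $\mu_k\to\mu$, which ensures positive $\mu_k$-mass in every neighborhood of $\spt(\mu)$ for large $k$, will force $[\psi_k]_j>0$ on all bins meeting $X_\infty$ once $k$ is sufficiently large.
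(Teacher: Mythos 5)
Your approach for parts~(\ref{it:I}), (\ref{it:II}) and the weak-$*$ convergence in (\ref{it:III}) is essentially the one taken in the paper: Keller--Liverani perturbation theory applied via a uniform Lasota--Yorke inequality for the family $\{\mcl_k\}$, using $\al_\ep<\rho$ to place the eigenvalue $\rho$ above the essential spectral radius. One necessary technical point you gloss over: the weak-$*$ convergence of $\mu_k$ is obtained by applying the Keller--Liverani machinery not to $\mcl_k=\pi_k\mcl$ but to the modified operator $\bar{\mcl}_k=\mcl_k\circ\pi_k=\pi_k\mcl\pi_k$, because it is the adjoint of $\bar{\mcl}_k$ (not of $\mcl_k$) that has $\mu_k$ as an eigenfunction --- this is exactly what Lemma~\ref{lem:QCMeasureFromEvector} gives. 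Once this is set up, the spectral projectors $\Pi_k=\mu_k(\cdot)\,h_k$ and $\Pi_0=\mu(\cdot)\,h$ converge in the $BV\to L^1$ norm and $|\mu_k(g)-\mu(g)|\to 0$ for $g\in BV$ follows directly; no invocation of the limiting formula for $\mu(f)$ from Theorem~\ref{thm:LiveMaume}(2) is needed.

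The genuine gap is in your plan for the support inclusion $\spt(\mu)\subseteq\spt(\mu_k)$. Tracking a single orbit $\{\hat T^n(x)\}_{n\ge 0}$ of a point $x\in X_\infty$ does produce positive entries $(P_k)_{j_n j_{n+1}}>0$, and since $P_k\psi_k=\rho_k\psi_k$, positivity of $[\psi_k]_{j_m}$ for some $m$ would propagate back to $[\psi_k]_{j_0}>0$. But nothing guarantees the orbit ever visits a bin carrying positive $\mu_k$-mass: the weak-$*$ convergence only says \emph{some} bins near $\spt(\mu)$ have $\mu_k$-mass, not the particular ones this orbit passes through. If $x$ is periodic, the chain traverses a fixed finite cycle, and there is no reason that cycle meets $\spt(\mu_k)$. (You are also aiming for a claim --- positivity on every bin meeting $X_\infty$ --- that is stronger than required and may fail when $X_\infty\setminus\spt(\mu)$ is nontrivial.) The paper avoids this by proving a uniform communication result (Lemma~\ref{lem:Garys}): there is one $n$ such that $(P_k^n)_{ij}>0$ for all $i$ with $\mu(I_i)>0$ and all $j$ with $\int_{I_j}h\,dm>0$. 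The proof propagates the whole indicator $\chi_i$ under $\mcl$, using the exponential mixing estimate $\|\mcl^n\chi_i/\rho^n-\mu(I_i)\,h\|_\infty\to 0$ of Theorem~\ref{thm:LiveMaume}(4) to make $\spt(\mcl^n\chi_i)$ eventually cover $\spt(h)$; Ulam projection only enlarges supports, so the same holds for $(\pi_k\mcl)^n\chi_i$. Combined with $\mu_k(h)>0$ for large $k$ (a consequence of the weak-$*$ convergence just established), this produces a $j$ with both $\int_{I_j}h\,dm>0$ and $[\psi_k]_j>0$, whence $[\psi_k]_i\ge\rho_k^{-n}(P_k^n)_{ij}[\psi_k]_j>0$ for every $i$ with $\mu(I_i)>0$. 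You should replace the single-orbit reachability argument with this transfer-operator mixing argument.
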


We will also establish a relation between admissibility and Ulam-admissibility of holes.
\begin{lem}[Admissibility and Ulam-admissibility]\label{lem:AdmissibleAndUlamAdmissible}
If  $H_0$ is an $\ep$-admissible hole for $\hat{T}$, there is some $n\in \N$ such that $H_{n-1}:=I\setminus X_{n-1}$ is $\ep$-Ulam-admissible for $\hat{T}^n$. 
\end{lem}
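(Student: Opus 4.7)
The plan is to read off each ingredient of the $\ep$-Ulam-admissibility condition for $T_n := (\hat T^n, H_{n-1})$ from the corresponding data of $(\hat T, H_0)$, then use the exponential rates encoded in $\t\Theta$ and $\t{\xi_\ep}$ to force the required strict inequality for all sufficiently large $n$.

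The first step is to set up a dictionary between the two systems. The transfer operator of $T_n$ is
\[
\mcl_{T_n}f \;=\; \hat\mcl^n(1_{X_{n-1}}f) \;=\; \mcl^n f,
\]
so $\mcl_{T_n}^k = \mcl^{nk}$ for every $k$. Consequently $D_k(T_n) = D_{nk}(T)$, giving $D_\infty(T_n) = D_\infty(T)\neq\emptyset$. An $\ep$-adequate partition for $T_n$ must be adapted to its hole $X_0(T_n)=X_{n-1}$ and the expansion $DT^n$ on $X_{n-1}$, which is \emph{verbatim} the definition of $\mc G_\ep^{(n)}(T)$; the good/bad dichotomy of its elements depends only on iterates of $\mcl_{T_n}=\mcl^n$, and thus agrees with that for $T^n$. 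Hence $\|D(T_n)^{-1}\|_\infty = \|(DT^n)^{-1}\|_\infty$ and $\xi_\ep(T_n) = \xi_{\ep,n}(T)$. Finally, the escape rate of $T_n$ equals $\rho^n$: by Theorem~\ref{thm:LiveMaume}(4) applied to $(\hat T, H_0)$, $\mcl^m 1/\rho^m \to h\,\mu(1)$ uniformly on $D_m$, so $\mcl^{n(k+1)}1/\mcl^{nk}1 \to \rho^n$ uniformly on $D_{nk}(T)$, and taking the infimum then the limit yields $\rho(T_n)=\rho^n$.

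Under these identifications, the target inequality $\al_\ep(T_n)<\rho(T_n)$ reads
\[
\|(DT^n)^{-1}\|_\infty\bigl(2+\ep+\xi_{\ep,n}(T)\bigr) \;<\; \rho^n.
\]
Taking $n$-th roots, the first factor converges (as a genuine limit, by submultiplicativity of $n\mapsto\|(DT^n)^{-1}\|_\infty$ and Fekete's lemma) to $\t\Theta$; and the second factor has $\limsup$ equal to $\t{\xi_\ep}$, because the ratio $(2+\ep+\xi_{\ep,n})/(1+\xi_{\ep,n})$ is bounded in $[1,2+\ep]$, so its $n$-th root tends to $1$ and the $\limsup$ matches $\limsup_n(1+\xi_{\ep,n})^{1/n}=\t{\xi_\ep}$. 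Thus the $\limsup$ of the $n$-th root of the left-hand side equals $\t\Theta\,\t{\xi_\ep}$, which is strictly less than $\rho$ by the standing $\ep$-admissibility hypothesis. Picking any $\t\Theta\,\t{\xi_\ep}<\beta<\rho$, the left-hand side is eventually bounded above by $\beta^n < \rho^n$, and any such $n$ certifies $\ep$-Ulam-admissibility of $H_{n-1}$ for $\hat T^n$.

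The one place I would take care is the verbatim matching in the dictionary step: one must check that viewing $\hat T^n$ as its own Lasota-Yorke map with hole $H_{n-1}$ produces exactly the same set of $\ep$-adequate partitions, the same good/bad classification, and hence the same $\xi_\ep$ as the quantity $\xi_{\ep,n}(T)$ already defined in terms of $T$. After that bookkeeping the rest is soft asymptotic analysis, using only the defining limits of $\t\Theta$ and $\t{\xi_\ep}$.
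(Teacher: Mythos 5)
Your argument is essentially the paper's: the same dictionary (transfer operator of $(\hat T^n,H_{n-1})$ equals $\mcl^n$, so $D_\infty$ is unchanged, $\xi_\ep(\hat T^n,H_{n-1})=\xi_{\ep,n}(T)$ and the relevant expansion constant is $\|(DT^n)^{-1}\|_\infty$), the same identification of the escape rate of the iterated system with $\rho^n$, and the same comparison of $n$-th roots against $\t{\Theta}\,\t{\xi_\ep}<\rho$; your treatment of the bounded factor $(2+\ep+\xi_{\ep,n})/(1+\xi_{\ep,n})$ is only cosmetically different from the paper's choice of an intermediate $\eta$ with $2\eta^n<\rho^n$.

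The one step where your justification does not quite work as written is $\rho(\hat T^n,H_{n-1})=\rho^n$. From the uniform convergence $\mcl^m 1/\rho^m\to h\,\mu(1)$ in Theorem~\ref{thm:LiveMaume}(4) you cannot directly conclude that $\mcl^{n(k+1)}1/\mcl^{nk}1\to\rho^n$ uniformly on $D_{nk}$: at points of $D_{nk}$ where $h$ is small or vanishes, the denominator $\mcl^{nk}1/\rho^{nk}$ is not bounded away from zero, so the quotient of the two uniform limits is uncontrolled; one would need a lower bound such as $\inf_{D_{nk}}h>0$, which is not available in this setting. The paper sidesteps this by invoking part (2) of Theorem~\ref{thm:LiveMaume} instead: applying the formula $\mu(f)=\lim_{m\to\infty}\inf_{x\in D_m}\mcl^m f(x)/\mcl^m 1(x)$ with $f=\mcl^n 1$ gives $\lim_{m}\inf_{D_m}\mcl^{m+n}1/\mcl^m 1=\mu(\mcl^n 1)=\rho^n$, and passing to the subsequence $m=nk$ yields exactly the defining limit for the escape rate of $(\hat T^n,H_{n-1})$. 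With that substitution (and your explicit check that $D_\infty\neq\emptyset$ persists, which the paper leaves implicit), your proof coincides with the paper's.
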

The proof of this lemma is presented in \S\ref{subS:AdmVsUlamAdm}.
This result, together with Lemma~\ref{lem:EnlargingHoles}, broadens the scope of applicability of Theorem~\ref{MainThm} by allowing to (i) replace the map by an iterate (Lemma~\ref{lem:AdmissibleAndUlamAdmissible}), or (ii) enlarge the hole in a dynamically consistent way (Lemma~\ref{lem:EnlargingHoles}).
It also ensures that several examples in the literature can be treated with our method; in particular, all the examples presented in \cite{LiveMaume}.

\section{Examples}\label{S:ex}

To illustrate the efficacy of Ulam's method, beyond the small-hole setting, we present some examples of Ulam-admissible open Lasota-Yorke systems.
We start with the case of full-branched maps in \S\ref{subS:fullBranchedMaps}, and treat some more general examples, including $\beta$-shifts, in \S\ref{subS:nonfullBranchedMaps}.
We then analyze Lorenz-like maps. They provide transparent evidence of the scope of the results for open systems, as well as closed systems with repellers. They also illustrate how the admissibility hypothesis may be checked in applications.

\subsection{Full-branched maps.}\label{subS:fullBranchedMaps}
In the next examples, we will use the following notation.
Given a  Lasota-Yorke map with holes,  $(\hat{T}, H_0)$ with monotonicity partition  $\mc{Z}$, we let
$\mc{Z}_h=\{Z\in \mc{Z}:  Z \subseteq H_0 \}$,
$\mc{Z}_f=\{Z\in \mc{Z}: Z\cap H_0=\emptyset, T(Z)=I \}$ and
$\mc{Z}_u=\{Z\in \mc{Z}: Z\not\in \mc{Z}_h \cup  \mc{Z}_f\}$. Thus, the elements of $\mc{Z}_f$
are precisely the ones contained in $X_0$ that are full branches for $T$, and those of $\mc{Z}_u$ are the remaining ones.

\begin{defn}
A \textit{full-branched map with holes}, $(\hat{T}, H_0)$, is a Lasota-Yorke map with holes, such that $\mc{Z}_u=\emptyset$.
\end{defn}

For piecewise linear maps, the situation is rather simple.
\begin{lem}\label{rmk:pwLinearFullBranch}
Let $T_0=(\hat{T}, H_0)$ be a piecewise linear full-branched map with holes. Then, for every $\ep>0$ the following holds: $\xi_\ep(T_0)=0$,
\begin{align*}
\rho(T_0) &= 1-\leb(H_0), \quad \text{and}\\
\al_\ep(T_0) &=\max_{Z\in \mc{Z}_f}\leb(Z)(2+\ep).
\end{align*}
\end{lem}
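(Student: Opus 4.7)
The plan is to use the natural monotonicity partition $\mc{Z}$ itself as the $\ep$-adequate partition, and to exploit full-branch linearity to compute everything in closed form. First I would verify that $\mc{Z}\in \mc{G}_\ep(T_0)$ for every $\ep>0$: condition (i) is automatic since $\mc{Z}_u=\emptyset$ forces each $Z\in\mc{Z}$ to lie entirely in $X_0$ or in $H_0$, and condition (ii) holds with the variation on the interior of each $Z$ actually equal to $0$, because piecewise linearity makes $|DT^{-1}|$ constant on each branch and $1_{X_0}$ constant on the interior of each $Z$.

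Second, I would identify $\mc{Z}_\ep^* = \mc{Z}_f$ and show every element is good. For $Z\in\mc{Z}_f$, linearity of $T|_Z$ onto $I$ gives $|DT|_Z|=1/\leb(Z)$, hence
\[
  \mcl 1_Z \;=\; \leb(Z)\cdot 1_I.
\]
Summing over $\mc{Z}_f$ and using $\sum_{Z\in\mc{Z}_f}\leb(Z)=\leb(X_0)=1-\leb(H_0)$ yields $\mcl 1 \equiv 1-\leb(H_0)$, and by induction $\mcl^n 1\equiv (1-\leb(H_0))^n$. Thus $D_n=I$ for all $n$ and
\[
  \frac{\mcl^n 1_Z(x)}{\mcl^n 1(x)}\;=\;\frac{\leb(Z)}{1-\leb(H_0)}\;>\;0
\]
independently of $n$ and $x$ (after one step the ratio stabilizes, since $\mcl 1_Z$ and $\mcl 1$ are both constant). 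Hence every $Z\in\mc{Z}_f$ is good, $\mc{Z}_{\ep,b}=\emptyset$, and $\xi_\ep(T_0)=0$.

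Third, from $\mcl^{n+1}1/\mcl^n 1 \equiv 1-\leb(H_0)$ I directly read off $\rho(T_0)=1-\leb(H_0)$. Finally, since $|DT|_Z|=1/\leb(Z)$ on each $Z\in\mc{Z}_f$, $\|DT^{-1}\|_\infty = \max_{Z\in\mc{Z}_f}\leb(Z)$; substituting $\xi_\ep=0$ into \eqref{defnAl_ep} gives $\al_\ep(T_0)=\max_{Z\in\mc{Z}_f}\leb(Z)\cdot(2+\ep)$.

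There is no substantial obstacle: the whole argument rests on the fact that for a full-branched piecewise linear map the constants form an $\mcl$-invariant line on which the action is multiplication by $1-\leb(H_0)$. The only mildly delicate point is verifying condition (ii) of $\ep$-adequacy at the endpoints of elements of $\mc{Z}$, which I would dispatch by interpreting $\var_A$ as the essential variation on the interior of $A$, as is standard in this framework.
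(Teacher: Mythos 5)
Your computations of $\rho(T_0)$, of $\|DT^{-1}\|_\infty=\max_{Z\in\mc{Z}_f}\leb(Z)$, and your verification that every $Z\in\mc{Z}_f$ is good (via $\mcl 1_Z=\leb(Z)\cdot 1$ and $\mcl^n1\equiv(1-\leb(H_0))^n$, so the ratio stabilises at $\leb(Z)/(1-\leb(H_0))>0$) are correct and match the paper's. The gap is precisely the point you flag and then dismiss: taking $\mc{Z}$ itself as the $\ep$-adequate partition by reading $\var_A$ as essential variation on the interior of $A$. That reading is not the convention of this framework. In the Rychlik/Liverani--Maume-Deschamps setup used here --- see the proof of Lemma~\ref{lem:LY1step}, where $\hat{g}$ is set equal to $0$ at the endpoints of the monotonicity intervals and the variation of $f\,1_{X_0}\hat{g}$ is controlled by summing pointwise variations over the elements of $\mc{Z}_\ep^*$ (closures included) --- the endpoint jumps of $g:=1_{X_0}|D\hat{T}^{-1}|$ are counted. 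For a full branch $Z$ both of whose endpoints are boundary points of the monotonicity partition or of the closed hole $H_0$, $\var_{Z}(g)$ can equal $2\leb(Z)$, and for the largest branch this is $2\|DT^{-1}\|_\infty$, which violates condition (ii) of $\ep$-adequacy for every $\ep<1$. Since the lemma asserts $\xi_\ep(T_0)=0$ for \emph{every} $\ep>0$, your choice of partition does not suffice; moreover, under the essential-variation reading the way condition (ii) is exploited in the proof of Lemma~\ref{lem:LY1step} would no longer be justified.

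The paper's proof is devoted exactly to this point: each $Z\in\mc{Z}_f$ is split into two subintervals $Z_-,Z_+$, each containing at most one discontinuity of $g$, whence $\var_{Z_\pm}(g)\le\|DT^{-1}\|_\infty<(1+\ep)\|DT^{-1}\|_\infty$; and, to keep the refined partition free of bad elements, one must check that $Z_\pm$ are still good. They are no longer full branches, so your explicit computation does not apply to them; the paper instead uses that being good is equivalent to having positive $\mu$-measure and that $\mu$ is atom-free, so the splitting point can be chosen with $\mu(Z_-),\mu(Z_+)>0$. Incorporating this refinement-and-goodness step turns your argument into the paper's proof; the rest of your write-up (the identities for $\rho(T_0)$ and $\al_\ep(T_0)$) stands as is.
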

\begin{proof}
If $T_0$ is a piecewise linear full-branched map, then
each interval $Z\in \mc{Z}_f$ is good. 
Observing that an interval being good is equivalent to having non-zero $\mu$ 
measure, and using the fact that $\mu$ is atom-free, each $Z$ may be split 
into two good intervals $Z_-, Z_+$ in such a way that there is at most one 
discontinuity of $g:=\mathbf{1}_{X_0}D\hat{T}^{-1}$ on each $Z_-, Z_+$. 
Thus, $\var_{Z_-}(g), \var_{Z_+}(g)\leq \|DT_0^{-1}\|_{\infty}$. Therefore 
$\xi_\ep(T_0)=0$. Also,
\[
\mc{L}_0(1)(x)=\sum_{y \in Z\in \mc{Z}_f, T_0(y)=x} \frac{1}{|DT_0(y)|}=\sum_{Z\in \mc{Z}_f} \leb(Z)=1-\leb(H_0).
\] 
On the other hand, $\sup_{x\in Z\in \mc{Z}_f}\frac{1}{|DT_0(x)|}=\max_{Z\in \mc{Z}_f}\leb(Z)$.
\end{proof}

In fact, in the piecewise linear, full branched setting, a direct calculation shows that Lebesgue measure is an accim for the open system. For perturbations of these systems, explicit estimates of $\rho$ and $\al_\ep$ are not generally available. However, we have the following bounds.
\begin{lem} \label{lem:FullBrancheMapswHoles}
Let $T_0=(\hat{T}, H_0)$ be a full-branched map with holes.
Then, for every $\ep>0$, there exists some computable $m\in \N$ such that $\xi_\ep(T_m)=0$, where $T_m:=(\hat{T}, H_m)$ is obtained from $T_0$ by enlarging the hole, as in Lemma~\ref{lem:EnlargingHoles}. Furthermore,
\begin{align*}
\rho(T_m)=\rho(T_0) &\geq \inf_{x\in I}\sum_{y \in Z\in \mc{Z}_f, T_0(y)=x} \frac{1}{|DT_0(y)|}=: \rho_0 \quad \text{and}\\
\al_\ep(T_m) &\leq \sup_{x\in Z\in \mc{Z}_f}\frac{1}{|DT_0(x)|}(2+\ep)=:\alpha_{\epsilon,0}.
\end{align*}
\end{lem}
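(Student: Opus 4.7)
My plan is to reduce the proof to three pieces: the bounds on $\rho$ and $\al_\ep$, both of which follow from direct manipulation, and the existence of $m$ with $\xi_\ep(T_m) = 0$, which is the substantive step. Lemma~\ref{lem:EnlargingHoles} is used throughout to pass between $T_0$ and $T_m$ without affecting $\rho$ or $\mu$.

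For $\rho$: Theorem~\ref{thm:LiveMaume}(2) gives $\rho = \mu(\mcl 1) \geq \inf_x \mcl 1(x)$; full-branchedness ($\mc{Z}_u = \emptyset$) means every preimage of $x$ under $T_0$ lies in some $Z \in \mc{Z}_f$, so $\mcl 1(x) = \sum_{y \in Z \in \mc{Z}_f,\, T_0(y)=x} 1/|DT_0(y)| \geq \rho_0$. For $\al_\ep(T_m)$: once $\xi_\ep(T_m) = 0$ is established, \eqref{defnAl_ep} simplifies to $\al_\ep(T_m) = \|DT_m^{-1}\|_\infty (2+\ep)$; and the inclusion $X_m \subseteq X_0 \subseteq \bigcup_{Z \in \mc{Z}_f} Z$ (using $\mc{Z}_u = \emptyset$) gives $\|DT_m^{-1}\|_\infty \leq \sup_{Z \in \mc{Z}_f,\, x \in Z} 1/|DT_0(x)|$, hence $\al_\ep(T_m) \leq \al_{\ep, 0}$.

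The substantive step is proving $\xi_\ep(T_m) = 0$ for some computable $m$. My strategy is that for $m$ sufficiently large, the natural partition $\{H_m\} \cup \{\text{connected components of } Z \cap X_m : Z \in \mc{Z}_f\}$ is $\ep$-adequate for $T_m$ and every element in $X_m$ is good. For adequacy, each component $A \subseteq Z \cap X_m$ lies within a single monotonicity piece $Z$ of $\hat{T}$, so the variation of $1_{X_m}|DT_m^{-1}|$ on $A$ equals $\var_A(|D\hat{T}^{-1}|)$. Because $|D\hat{T}|^{-1}$ has bounded variation on each $Z$, while the number of components of $Z \cap X_m$ grows with $m$ (and their lengths shrink), the per-component variation can be forced below $\ep\|DT_m^{-1}\|_\infty$ by choosing $m$ large enough, achieving the $(1+\ep)$-threshold.

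For goodness: $\hat{T}|_Z$ is a homeomorphism $Z \to I$ sending $A$ onto a connected component of $X_{m-1}$, and iterating gives that $\hat{T}^m(A)$ is a component of $X_0$. Since $T_0$ is full-branched, every component of $X_0$ is a union of full branches in $\mc{Z}_f$, so $\hat{T}^{m+1}(A) = \hat{T}(\text{component of } X_0) = I$. Combined with $A \subseteq X_m$, this yields $\mcl^{m+1} 1_A(x) > 0$ for every $x \in I$; then the quasi-conformal relation $\mu(\mcl^{m+1} 1_A) = \rho^{m+1}\mu(A)$ forces $\mu(A) > 0$, which by Theorem~\ref{thm:LiveMaume}(4) is equivalent to $A$ being good. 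The main obstacle I anticipate is making the variation estimate effective: pinning down an explicit computable $m$ from $\ep$, $\hat{\Theta}$, and the BV-norm of $|D\hat{T}|^{-1}$. The rest of the argument is essentially combinatorial-geometric and follows from the homeomorphism property of $\hat{T}|_Z$ together with the full-branched structure.
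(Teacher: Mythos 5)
Your treatment of the bounds on $\rho(T_0)$ and $\al_\ep(T_m)$, and your goodness argument (a surviving piece maps after finitely many steps onto a component of $X_0$, hence onto $I$ one iterate later, hence has positive $\mu$-measure, goodness being transferable between $T_0$ and $T_m$ via Lemma~\ref{lem:EnlargingHoles}) match the paper's proof in substance. The genuine gap is in the adequacy step, i.e.\ exactly at your choice of partition by connected components of $Z\cap X_m$, $Z\in\mc{Z}_f$. First, the parenthetical claim that the lengths of these components shrink as $m\to\infty$ is the crux and is not justified: under $\hat{T}|_Z$ such a component corresponds to a component of $X_{m-1}$, and a component of $X_{m-1}$ need \emph{not} lie inside a single rank-$(m-1)$ cylinder --- it can straddle two adjacent full branches whenever their common endpoint has a surviving orbit --- so the naive bound $\leq\hat{\Theta}^{m-1}$ does not apply, and the straddling case requires a separate argument (such components are pinned at preimages of the endpoints $0,1$ of $I$ and therefore still contract geometrically). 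Second, even granting small diameters, ``$|D\hat{T}|^{-1}$ has bounded variation on each $Z$'' does not imply small variation on short subintervals: a BV function can concentrate its variation on arbitrarily short intervals, and ``the number of components grows'' is irrelevant (many components does not force each to carry little variation). What saves the estimate is that $\hat{g}=|D\hat{T}|^{-1}$ is also \emph{continuous} on each $\bar{Z}$ (the branches extend $C^1$ to closures), so its variation function is uniformly continuous; this is also what yields a computable $m$ --- the very point you flagged as the obstacle. One further small point: you should note that $\|DT_m^{-1}\|_\infty$ is bounded below uniformly in $m$ (e.g.\ by $|D\hat{T}|^{-1}$ at a surviving fixed point), so the target threshold does not degenerate as $m$ grows.

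The paper avoids the first problem entirely by working with the $m$-fold monotonicity partition (refined by $X_m$): every surviving cell then lies in a rank-$m$ cylinder on which $\hat{T}^m$ is monotone onto $I$, so its diameter is at most $\hat{\Theta}^m$ automatically, and your goodness argument applies verbatim to these cells (each maps under $T_0^m$ onto a component of $X_0$). So either switch to that partition, which makes the adequacy estimate immediate, or keep your rank-one partition and supply the missing contraction argument for components of $X_{m-1}$ that straddle two branches; as written, the step ``their lengths shrink, hence per-component variation is small'' is a gap both in the geometry and in the BV-versus-continuity reasoning.
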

An immediate consequence is the following.
\begin{cor}\label{cor:checkQCFullBranch}
In the setting of Lemma~\ref{lem:FullBrancheMapswHoles}, if $\rho_0>\alpha_{\epsilon,0}$, then $H_m$ is $\ep$-Ulam admissible for $\hat{T}$.
In this case, Lemma~\ref{lem:EnlargingHoles} allows one to approximate the escape rate, accim and quasi-conformal measure for $T_0$
via Theorem~\ref{MainThm} applied to $T_m$.
\end{cor}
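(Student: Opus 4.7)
The plan is to verify the hypotheses of Theorem~\ref{MainThm} for the enlarged system $T_m$, and then transfer the resulting Ulam approximations back to $T_0$ via the identifications in Lemma~\ref{lem:EnlargingHoles}.

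First, I would fix $\epsilon > 0$ and choose $m$ as produced by Lemma~\ref{lem:FullBrancheMapswHoles}, so that $\xi_\epsilon(T_m) = 0$. From that same lemma one has $\rho(T_m) = \rho(T_0) \geq \rho_0$ and $\alpha_\epsilon(T_m) \leq \alpha_{\epsilon,0}$. Combining these two estimates with the standing hypothesis $\rho_0 > \alpha_{\epsilon,0}$ immediately gives $\alpha_\epsilon(T_m) < \rho(T_m)$, which is half of $\epsilon$-Ulam admissibility. The other half, $D_\infty(T_m) \neq \emptyset$, is a consequence of the full-branched structure: because $\mathcal{Z}_u = \emptyset$, every branch $Z \in \mathcal{Z}_f$ is contained in $X_0$ and maps surjectively onto $I$, so $\mathcal{L}^n 1$ is bounded below on $I$ by a finite positive constant (namely $\rho_0^n$), independently of $n$. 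Hence $H_m$ is an $\epsilon$-Ulam-admissible hole for $\hat T$.

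Theorem~\ref{MainThm}, applied to the open system $T_m$ with its corresponding Ulam matrices, then produces sequences $\rho_k(T_m) \to \rho(T_m)$, $h_k(T_m) \to h(T_m)$ in $L^1(m)$, and $\mu_k(T_m) \to \mu(T_m)$ in the weak-$*$ topology. By parts (1)--(3) of Lemma~\ref{lem:EnlargingHoles} one has $\rho(T_m) = \rho(T_0)$, $\mu(T_m) = \mu(T_0)$, and $h(T_0) = \rho(T_0)^{-m}\hat{\mathcal{L}}^m h(T_m)$; thus the Ulam output for $T_m$ yields approximations of $\rho(T_0)$ and $\mu(T_0)$ directly, while an approximation of the accim $h(T_0)$ is obtained by applying $\rho_k(T_m)^{-m}\hat{\mathcal{L}}^m$ to $h_k(T_m)$ and invoking the $BV$-continuity of $\hat{\mathcal{L}}$ on the $L^1$-convergent sequence.

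There is essentially no obstacle here: the content of the corollary is just a bookkeeping step that assembles Lemmas~\ref{lem:EnlargingHoles} and~\ref{lem:FullBrancheMapswHoles} into a verifiable criterion. The only minor point requiring care is checking that non-emptiness of $D_\infty$ is inherited from the full-branched hypothesis, which as noted above is immediate from the uniform positivity of $\mathcal{L}^n 1$.
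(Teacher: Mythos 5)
Your proposal is correct and follows essentially the same route as the paper, which presents the corollary as an immediate consequence: combine the estimates $\rho(T_m)=\rho(T_0)\geq\rho_0$, $\alpha_\epsilon(T_m)\leq\alpha_{\epsilon,0}$ and $\xi_\epsilon(T_m)=0$ from Lemma~\ref{lem:FullBrancheMapswHoles} with the hypothesis $\rho_0>\alpha_{\epsilon,0}$ to get Ulam-admissibility of $H_m$, then apply Theorem~\ref{MainThm} to $T_m$ and transfer back to $T_0$ via the identities of Lemma~\ref{lem:EnlargingHoles} (your use of $\rho_k^{-m}\hat{\mcl}^m$ on $h_k(T_m)$ only needs $L^1$-boundedness of $\hat{\mcl}$, not $BV$-continuity). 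One small precision: the condition $D_\infty\neq\emptyset$ must be verified for the system $T_m$, and your bound $\mcl^n 1\geq\rho_0^n$ on all of $I$ is valid for $\mcl_0$ but for $\mcl_m$ only on $X_{m-1}$; the needed non-emptiness follows instantly from the fixed point of a full branch, which lies in $X_\infty$ and hence in every $D_n(T_m)$ --- the same observation made in the proof of Lemma~\ref{lem:FullBrancheMapswHoles}.
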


\begin{proof}[Proof of Lemma~\ref{lem:FullBrancheMapswHoles}]
First, let us note that for any map with $\mc{Z}_f\neq \emptyset$, we have that $D_\infty\neq \emptyset$, as the map has at least one fixed point outside the hole.
If $m$ is sufficiently large, each interval $Z\in\mc{Z}^{(m)}$ is either (i) contained in $H_{m-1}$, and thus not in $\mc{Z}^{*(m)}$ or (ii) $T_0^m(Z)=I$ and $\var_Z(\hat{g}1_{X_m})<\|\hat{g}1_{X_m}\|_\infty(1+\ep)$. 
In the latter case, $Z$ is a good interval for $T_0$, because
$\mu_0(Z)= \rho_0^m \mu_0(\mcl_0^m 1) \geq  \rho_0^m \|DT_0^m\|_\infty^{-1} \mu_0(I)>0$.
Since good intervals for $T_0$ and for $T_m$ coincide (see                                                                                     
beginning of proof of Lemma~\ref{lem:EnlargingHoles}), we get that $\xi_\ep(T_m)=0$.

Furthermore,
\[
\rho(T_0)=\rho(T_0)\mu_0(1)=\mu_0(\mc{L}_0(1)) \geq  \inf_{x\in I}\mc{L}_0(1)(x)= \inf_{x\in I}\sum_{y \in Z\in \mc{Z}_f, T_0(y)=x} \frac{1}{|DT_0(y)|}.
\]
The bound on $\al_\ep(T_m)$ follows directly from the definition. 
\end{proof}

The following is an interesting consequence of Lemmas~\ref{rmk:pwLinearFullBranch} and~\ref{lem:FullBrancheMapswHoles}.
\begin{cor}\label{cor:pwLinearFullBranched}
Let $(\hat{T}, H_0)$ be a piecewise linear full-branched map with holes.
Assume that $\leb(H_0)< 1-2\max_{Z\in \mc{Z}_f}\leb(Z)$. Then, if $\ep>0$ is sufficiently small, $H_0$ is $\ep$-Ulam-admissible for any full-branched map $(\hat{S},H_0)$ that is a sufficiently small $C^{1+Lip}$ perturbation of $(\hat{T}, H_0)$ (where the $C^{1+Lip}$ topology is defined, for example, by the norm given by the maximum of the $C^{1+Lip}$ norms of each branch).  In particular, Theorem~\ref{MainThm} applies.
\end{cor}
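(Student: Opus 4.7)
The plan is to show that the Ulam-admissibility inequality $\al_\ep(\hat{S})<\rho(\hat{S})$ is preserved under small $C^{1+Lip}$ perturbations of $\hat{T}$, by verifying that each ingredient depends suitably continuously on $\hat{S}$. First, Lemma~\ref{rmk:pwLinearFullBranch} applied to $\hat{T}$ yields $\xi_\ep(\hat{T})=0$, $\rho(\hat{T})=1-\leb(H_0)$ and $\al_\ep(\hat{T})=(2+\ep)\max_{Z\in\mc{Z}_f}\leb(Z)$. The standing hypothesis rewrites as $2\max_{Z\in\mc{Z}_f}\leb(Z)<1-\leb(H_0)$, so for $\ep>0$ sufficiently small, $\al_\ep(\hat{T})<\rho(\hat{T})$ with a uniform positive gap.

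For a full-branched map $\hat{S}$ close to $\hat{T}$ in $C^{1+Lip}$, the combinatorial structure (in particular $\mc{Z}_f$) is preserved, and $C^1$-closeness gives $\|D\hat{S}^{-1}\|_\infty \to \|D\hat{T}^{-1}\|_\infty = \max_{Z\in\mc{Z}_f}\leb(Z)$. Lemma~\ref{lem:FullBrancheMapswHoles} supplies the lower bound $\rho(\hat{S})\ge \rho_0(\hat{S})=\inf_{x\in I}\sum_{y\in Z\in\mc{Z}_f,\,\hat{S}(y)=x}1/|D\hat{S}(y)|$, which, by $C^1$-closeness and piecewise linearity of $\hat{T}$, converges to $\rho_0(\hat{T})=\sum_{Z\in\mc{Z}_f}\leb(Z)=1-\leb(H_0)$.

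The main technical step is showing $\xi_\ep(\hat{S})=0$. Since $\hat{S}$ is $C^{1+Lip}$, $|D\hat{S}|^{-1}$ is Lipschitz on each $Z\in\mc{Z}_f$, so subdividing each full branch into sufficiently fine subintervals $\{Z_{j,k}\}$ produces a candidate $\ep$-adequate partition satisfying $\var_{Z_{j,k}}(\mathbf{1}_{X_0}|D\hat{S}|^{-1})<\|D\hat{S}^{-1}\|_\infty(1+\ep)$, together with the hole pieces from $\mc{Z}_h$. It remains to verify each $Z_{j,k}\subset X_0$ is good for $\hat{S}$, and here I would bootstrap as in Lemma~\ref{rmk:pwLinearFullBranch}. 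Iterates $\hat{S}^n$ inherit the full-branched structure (since $\mc{Z}_u=\emptyset$ is preserved under composition), and the same Lipschitz subdivision argument applied to $\hat{S}^n$ gives $\xi_{1,n}(\hat{S})=0$ for all $n$, hence $\t{\xi_1}(\hat{S})=1$; combined with continuity of $\t{\Theta}(\hat{S})$ and $\rho(\hat{S})$ under perturbation from $\hat{T}$ (which is admissible with room to spare, since the corollary's hypothesis is strictly stronger than the admissibility inequality $\max_Z\leb(Z)<1-\leb(H_0)$), one obtains the non-Ulam admissibility of Definition~\ref{defn:AdmissibleHole} for $\hat{S}$. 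Theorem~\ref{thm:LiveMaume} then delivers an atom-free quasi-conformal measure $\mu_{\hat{S}}$, whereupon the argument in the proof of Lemma~\ref{rmk:pwLinearFullBranch} applies verbatim: goodness of $Z_{j,k}$ is equivalent to $\mu_{\hat{S}}(Z_{j,k})>0$, and atom-freeness together with $\mu_{\hat{S}}(Z)>0$ for each full branch allows the subdivisions to be chosen so that every $Z_{j,k}$ has positive $\mu_{\hat{S}}$-measure.

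Putting the three ingredients together, for $\ep$ small and $\hat{S}$ sufficiently $C^{1+Lip}$-close to $\hat{T}$ one gets
\[
\al_\ep(\hat{S})=\|D\hat{S}^{-1}\|_\infty(2+\ep)<\rho_0(\hat{S})\le\rho(\hat{S}),
\]
establishing $\ep$-Ulam-admissibility and the applicability of Theorem~\ref{MainThm}. The principal obstacle is the bootstrap in the third paragraph: one needs the non-Ulam admissibility of $\hat{S}$ (and atom-freeness of $\mu_{\hat{S}}$) in order to deduce goodness of the subintervals that in turn certify Ulam-admissibility. What makes this tractable is the rigidity of the full-branched combinatorial structure under both iteration and small $C^{1+Lip}$ perturbations, which keeps $\t{\xi_1}(\hat{S})=1$ stable and allows the stronger admissibility to be read off by continuity from its strict validity at $\hat{T}$.
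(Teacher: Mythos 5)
Your overall skeleton---continuity of $\rho_0(\hat{S})$ and of $\|D\hat{S}^{-1}\|_\infty$, plus reducing everything to $\xi_\ep(\hat{S},H_0)=0$---matches the paper's proof, but the way you try to get $\xi_\ep(\hat{S},H_0)=0$ has a genuine gap. The bootstrap in your third paragraph is circular, as you yourself flag: the claim $\xi_{1,n}(\hat{S})=0$ requires every element of your fine partitions (for every iterate $\hat{S}^n$) to be \emph{good}, yet the ``Lipschitz subdivision argument'' only delivers the variation (adequacy) condition and says nothing about goodness; you certify goodness only through the atom-free quasi-conformal measure $\mu_{\hat{S}}$, whose existence via Theorem~\ref{thm:LiveMaume} needs precisely the admissibility you are in the middle of establishing. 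The closing appeal to ``rigidity'' and to reading admissibility ``off by continuity'' is not an argument: neither $\tilde{\xi}_1$ nor the inequality in Definition~\ref{defn:AdmissibleHole} is known to vary continuously with the map. Moreover, even granting an atom-free $\mu_{\hat{S}}$, your assertion that a \emph{sufficiently fine} subdivision can always be chosen with every piece of positive $\mu_{\hat{S}}$-measure is false in general: $\mu_{\hat{S}}$ is carried by $X_\infty$, typically a Cantor set, and once the variation forces several cut points inside a branch, some pieces unavoidably lie in gaps of $\supp\mu_{\hat{S}}$ and are bad.

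What repairs this---and is exactly the observation the paper's proof turns on, which you never use---is that $1/|D\hat{T}|$ is \emph{constant} on each branch, so the per-branch variation of $1/|D\hat{S}|$ depends continuously on the map in the $C^{1+Lip}$ topology and can be made smaller than $\ep\|D\hat{S}^{-1}\|_\infty$. Hence no fine subdivision is needed: each full branch $Z$ is cut only once, to separate the two jumps of $1_{X_0}|D\hat{S}|^{-1}$ at $\partial Z$, and then $\var_{Z_{\pm}}(1_{X_0}|D\hat{S}|^{-1})\leq \|D\hat{S}^{-1}\|_\infty+\var_Z(|D\hat{S}|^{-1})<(1+\ep)\|D\hat{S}^{-1}\|_\infty$. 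Each full branch is good directly, with no reference to $\mu_{\hat{S}}$, because it surjects onto $I$: $\mcl 1_Z\geq \inf_Z |D\hat{S}|^{-1}>0$ pointwise, so $\mcl^n 1_Z/\mcl^n 1$ is bounded below; in particular at least one half of each branch is good, which bounds the number of contiguous bad elements without circularity, and once the atom-free $\mu_{\hat{S}}$ is available the single cut point can be placed so that \emph{both} halves are good, exactly as in the proof of Lemma~\ref{rmk:pwLinearFullBranch}. This yields $\xi_\ep(\hat{S},H_0)=0$, hence $\al_\ep(\hat{S})=(2+\ep)\|D\hat{S}^{-1}\|_\infty$, and your first two paragraphs (which are correct, and coincide with the paper's use of Lemma~\ref{lem:FullBrancheMapswHoles}) then close the argument via $\al_\ep(\hat{S})<\rho_0(\hat{S})\leq\rho(\hat{S})$ for $\ep$ small and $\hat{S}$ close to $\hat{T}$.
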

\begin{proof}
The statement for $(\hat{T}, H_0)$ follows from Remark~\ref{rmk:pwLinearFullBranch}.
For perturbations, the statement follows from Lemma~\ref{lem:FullBrancheMapswHoles}, by observing that the quantities $\rho_0$ and $\al_{\ep,0}$, as well as the variation of $1/|D\hat{T}|$ on each interval depend continuously on $\hat{T}$, with respect to the $C^{1+Lip}$ topology.
\end{proof}

Corollary~\ref{cor:pwLinearFullBranched} can apply to maps with arbitrarily large holes, as the next example shows.  
\begin{ex}[Arbitrarily large holes]
Let $\del>0$, $H_0=[\del, 1-\del]$, and 
\[
T_\del(x)= \begin{cases}
\frac 2\del x \quad\text{if } x<\frac \del 2, \\
1-\frac 2\del (x-\frac \del 2) \quad\text{if } \frac \del 2\leq x<\del, \\
\frac 2\del (x-1+\del) \quad\text{if } 1-\del \leq x<1-\frac \del 2, \\
1-\frac 2\del (x-1+\frac \del 2) \quad\text{if } 1-\frac \del 2\leq x\leq 1.
\end{cases}
\]
Then, $\leb(H_0)=1-2\del< 1-\del=1-2\max_{Z\in \mc{Z}_f}\leb(Z)$ and the hypotheses of Corollary~\ref{cor:pwLinearFullBranched} are satisfied. Thus, Ulam's method converges for sufficiently small $C^{1+Lip}$ perturbations of $T_\del$ that are full-branched.
\end{ex}

Other examples of this type may be found in \cite{Bahsoun} and \cite{BahsounBose}. Bahsoun \cite{Bahsoun} established rigorous computable bounds for the errors in the Ulam method, which allowed him to find rigorous bounds on the escape rate for open Lasota-Yorke maps. Bose and Bahsoun \cite{BahsounBose} related the escape rate to the Lebesgue measure of the hole. Both results rely on the existence of Lasota-Yorke type inequalities, relating $BV$ and $L^1(m)$ norms.
Such inequalities may be obtained by exploiting the full-branched structure of the map.
\begin{ex}[Bahsoun  \cite{Bahsoun}]
Let 
\[
\hat{T}(x)= \begin{cases}
2.08 x \quad\text{if } x<\frac12, \\
2-2x \quad \text{if } x\geq \frac12.
\end{cases}
\]
In this case, Corollary~\ref{cor:pwLinearFullBranched} does not yield direct information about the applicability of the Ulam method to perturbations of this map,
because $\leb(H_0)=\frac{ .08}{4.16}$ but $1-2\max_{Z\in \mc{Z}_f}\leb(Z)=0$. However, the hypotheses of the corollary are easily satisfied for the second power of the map.
We note that $\rho$ controls the rate of mass loss, which is slower than 4.08/4.16, while $\alpha_\ep$ is related to the relaxation rate on the survivor set. In this case, one iteration is not enough to see this asymptotic rate is less than the mass loss. However, two iterations suffice.
\end{ex}

\subsection{Nearly piecewise linear maps with enough full branches.}
\label{subS:nonfullBranchedMaps}

When non-full branches are present, the dynamics is typically non-Markovian. Thus, even in the piecewise linear setting there may not be direct ways to                                                                        find the various objects of interest (escape rates, accims and quasi-conformal measures) exactly.
We show that Ulam's method provides rigorous approximations in specific systems. 
The following example is closely related to \cite[6.2 \& 6.3]{LiveMaume}.
\begin{lem}\label{lem:manyFullBranches}
Let $T=(\hat{T}, H_0)$ be a piecewise linear Lasota-Yorke map with holes, and assume $\mc{Z}_f\neq \emptyset$. Let $c_u$ be the maximum number of contiguous elements in $\mc{Z}_u$. If $ \|DT^{-1}\|_\infty (3+ c_u)< \rho$, then $H_0$ is $(1+\ep)$-Ulam-admissible for $\hat{T}$, for every $\ep>0$ sufficiently small. Thus, the hypotheses of Theorem~\ref{MainThm} are satisfied.
\end{lem}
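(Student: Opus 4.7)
The plan is to verify both conditions defining $(1+\ep)$-Ulam-admissibility of $H_0$: that $D_\infty\neq\emptyset$ and that $\al_{1+\ep}(T)<\rho(T)$ for $\ep>0$ sufficiently small. The first is immediate from $\mc{Z}_f\neq\emptyset$: I would pick any $Z\in\mc{Z}_f$ and observe that the restriction $T|_Z:Z\to I$ is a continuous surjection of the closed interval $Z\subseteq I$ into itself, so by the intermediate value theorem it admits a fixed point $x^\ast\in Z\subseteq X_0$ whose entire forward orbit remains in $X_0$, giving $x^\ast\in D_\infty$.

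For the escape-rate condition, I would exhibit a single $(1+\ep)$-adequate partition $\mc{Z}_{1+\ep}$ with at most $c_u$ contiguous bad elements. Take $\mc{Z}_{1+\ep}$ to be the monotonicity partition $\mc{Z}$ refined so that the endpoints of the intervals comprising $H_0$ are included as partition points; then each element is either entirely in $X_0$ or entirely in $H_0$. Because $\hat{T}$ is piecewise linear, $|DT^{-1}|$ is constant on each such element, so for every $A\in\mc{Z}_{1+\ep}^*$ one has
\[
\var_A\bigl(1_{X_0}|DT^{-1}|\bigr) \leq 2\|DT^{-1}\|_\infty < (2+\ep)\|DT^{-1}\|_\infty,
\]
confirming $(1+\ep)$-adequacy for every $\ep>0$ without any further subdivision.

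The heart of the argument is to show that $\xi_{1+\ep}(T)\leq c_u$. Following the proof of Lemma~\ref{rmk:pwLinearFullBranch}, I would argue that any $A\in\mc{Z}_{1+\ep}^*$ contained in some $Z\in\mc{Z}_f$ is good: since $T|_Z:Z\to I$ is onto, for $m$ sufficiently large one gets $\mu(A)=\rho^{-m}\mu(\mcl^m 1_A)\geq \rho^{-m}\|DT^m\|_\infty^{-1}\mu(I)>0$, and $\mu$-positivity of $A$ is equivalent to goodness of $A$. Consequently the bad elements of $\mc{Z}_{1+\ep}^*$ all sit inside elements of $\mc{Z}_u$; since every full branch breaks contiguity while $\mc{Z}_h$-pieces do not, the longest run of contiguous bad elements is bounded by $c_u$. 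Substituting into the definition of $\al_{1+\ep}$ gives $\al_{1+\ep}(T)=\|DT^{-1}\|_\infty\bigl(3+\ep+\xi_{1+\ep}(T)\bigr)\leq \|DT^{-1}\|_\infty(3+\ep+c_u)$, which is strictly less than $\rho(T)$ for $\ep$ small by the hypothesis $\|DT^{-1}\|_\infty(3+c_u)<\rho$. This establishes $(1+\ep)$-Ulam-admissibility, and Theorem~\ref{MainThm} applies. The main obstacle is the goodness step, which requires $\mu$ to charge every $\mc{Z}_f$-subinterval uniformly; everything else is routine partition bookkeeping once the piecewise linearity of $\hat{T}$ is exploited to trivialize the variation estimate.
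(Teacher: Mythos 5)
Your argument follows the paper's proof almost step for step: a fixed point in a full branch gives $D_\infty\neq\emptyset$; piecewise linearity makes the (hole-respecting) monotonicity partition $(1+\ep)$-adequate, since each variation term is at most $2\|DT^{-1}\|_\infty<(2+\ep)\|DT^{-1}\|_\infty$; and once the full-branch elements are known to be good, $\xi_{1+\ep}\le c_u$, hence $\al_{1+\ep}\le \|DT^{-1}\|_\infty(3+\ep+c_u)<\rho$ for $\ep$ small. The one place you genuinely deviate is the goodness step, and that is where there is a gap: you certify that elements inside $\mc{Z}_f$ are good by writing $\mu(A)=\rho^{-m}\mu(\mcl^m 1_A)>0$ and invoking the equivalence ``good $\Leftrightarrow$ $\mu$-positive''. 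But the existence of a measure $\mu$ satisfying $\mu(\mcl f)=\rho\,\mu(f)$, and its identification with the limit that defines goodness, are conclusions of Theorem~\ref{thm:LiveMaume}, which presupposes an admissible hole --- essentially the kind of hypothesis this lemma is in the business of establishing. As written, the step is circular: nothing available at that point of the argument guarantees that such a $\mu$ exists for $(\hat{T},H_0)$.

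The repair is the paper's one-line observation that $\mc{Z}_f\subseteq\mc{Z}_g$ follows directly from the definition of good, with no reference to $\mu$. Concretely, if $Z\in\mc{Z}_f$ then $Z\subseteq X_0$ and $T(Z)=I$, so $\mcl 1_Z\ \ge\ \|D\hat{T}\|_\infty^{-1}\,\mathbf{1}$ pointwise on $I$; by positivity of $\mcl$, $\mcl^n 1_Z\ge \|D\hat{T}\|_\infty^{-1}\,\mcl^{n-1}1$, while $\mcl^n 1=\mcl^{n-1}(\mcl 1)\le \|\mcl 1\|_\infty\,\mcl^{n-1}1$, so $\inf_{x\in D_n}\mcl^n 1_Z(x)/\mcl^n 1(x)\ \ge\ \|D\hat{T}\|_\infty^{-1}/\|\mcl 1\|_\infty>0$ uniformly in $n$, i.e.\ $Z$ is good by definition. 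One further bookkeeping caveat about your refinement at the hole endpoints: if an element of $\mc{Z}_u$ contains a component of $H_0$ in its interior, the refinement turns it into two or more surviving (potentially bad) pieces separated only by hole pieces, so a contiguous run of bad elements of the refined partition could exceed $c_u$. You should either adopt the paper's implicit convention that the monotonicity partition already separates $H_0$ from $X_0$ (which is also what makes condition (i) of adequacy hold without refinement), or state the bound in terms of the number of surviving sub-pieces in the worst run.
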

\begin{proof}
For any map with $\mc{Z}_f\neq \emptyset$, we have that $D_\infty\neq \emptyset$, as the map has at least one fixed point outside the hole. Furthermore, for each $Z\in \mc{Z}$, one has that $\var_Z(g)\leq 2\|g\|_{\infty}$, so $\mc{Z}$ is a $(1+\ep)$-adequate partition for $T$. Also, it follows from the definition of $\mc{Z}_g$ that $\mc{Z}_f \subseteq \mc{Z}_g$. Thus, $\mc{Z}_b \subseteq \mc{Z}_u$, and $\xi_{1+\ep}\leq c_u$. Therefore, $\al_\ep\leq \|DT^{-1}\|_\infty (3+ \ep+c_u)<\rho$, provided $\ep>0$ is sufficiently small.
\end{proof}

A concrete example where the previous lemma applies is that of $\beta$-shifts.
\begin{ex}
Let $\beta>1$, and $\hat{T}_\beta$ be the $\beta$-shift, $\hat{T}_\beta(x)=\beta x \pmod{1}$.
Let $H_0\subset I$ be a finite union of closed intervals, and let $f$ be the number of full branches of $\hat{T}_\beta$ outside $H_0$. Then, for the open system $(\hat{T}_\beta, H_0)$, we have that $\rho\geq \frac{f}{\beta}$.
Then, the hypotheses of Lemma~\ref{lem:manyFullBranches} are satisfied, provided 
$f>3+c_u$. This happens, for example, when $\beta\geq 5$ and $H_0$ is a single interval
of the form $[\frac{[\beta]}{\beta},y]$ or $[y,1]$, with $\frac{[\beta]}{\beta}<y<1$. Also,  when $\beta\geq 6$ and $H_0$ is a single interval contained in $[\frac{[\beta]}{\beta},1]$; or when $\beta\geq 7$ and $H_0$ is any interval leaving at least 7 full branches in $X_0$ (recall from Subsection~\ref{subS:admissibility} that two                                                                                    
bad elements of $\mc{Z}_u$ are contiguous if there are no good elements of $\mc{Z}_f \cup \mc{Z}_u$ between                                                                                    
them, but there may be elements of $\mc{Z}_h$ in between).                                                                                                                                                   

We include Figures \ref{fig1a}-\ref{fig1h}, obtained from numerical experiments for $\be=5.9$, and two different choices of holes. 
They include approximations to the densities of accims and cumulative distribution functions of the quasi-conformal measures for systems with a hole, as well as the acim and conformal measure for the closed system. 

\begin{figure}

\centering 
  \includegraphics[width=7cm]{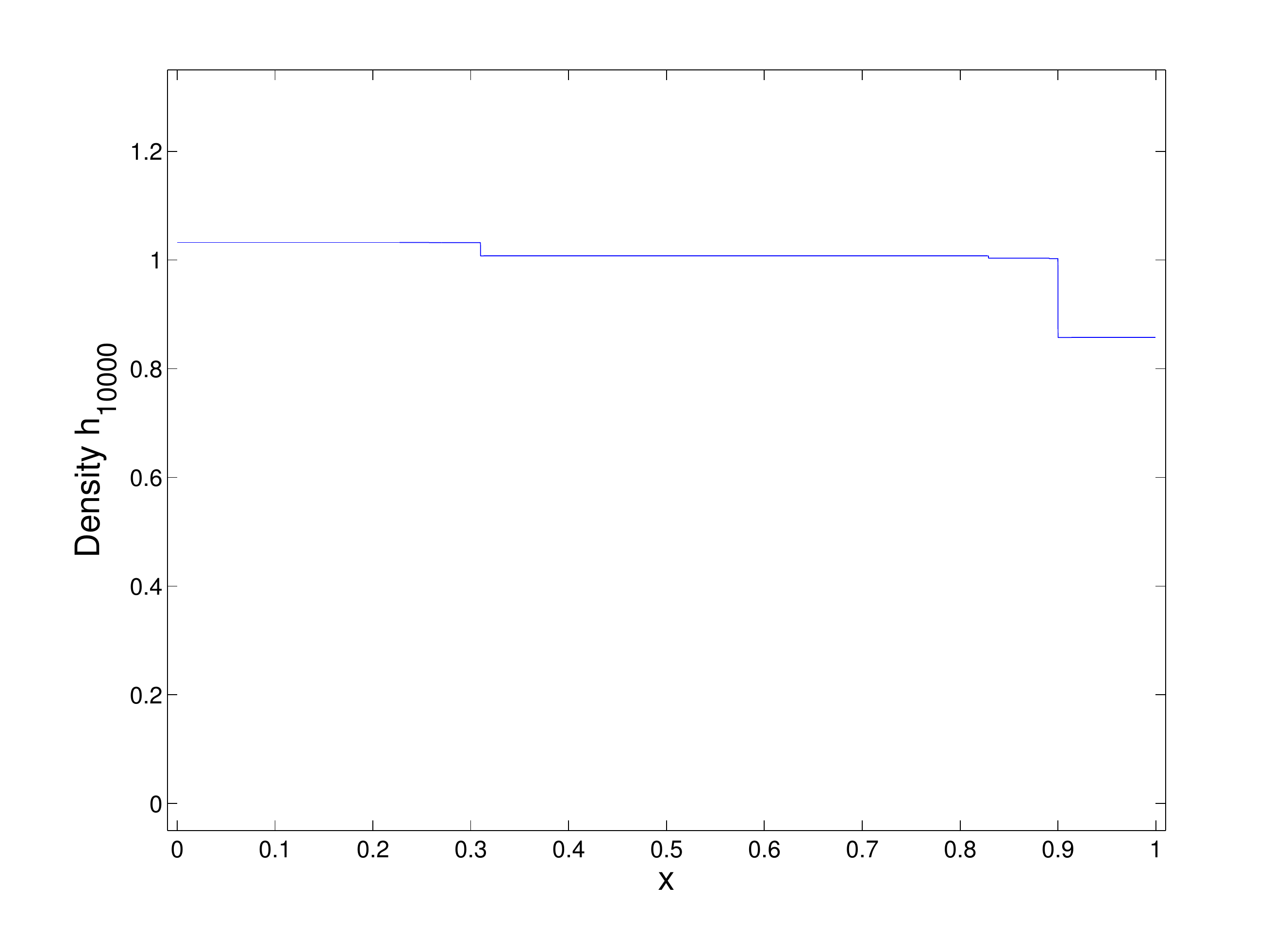}
\qquad\includegraphics[width=7cm]{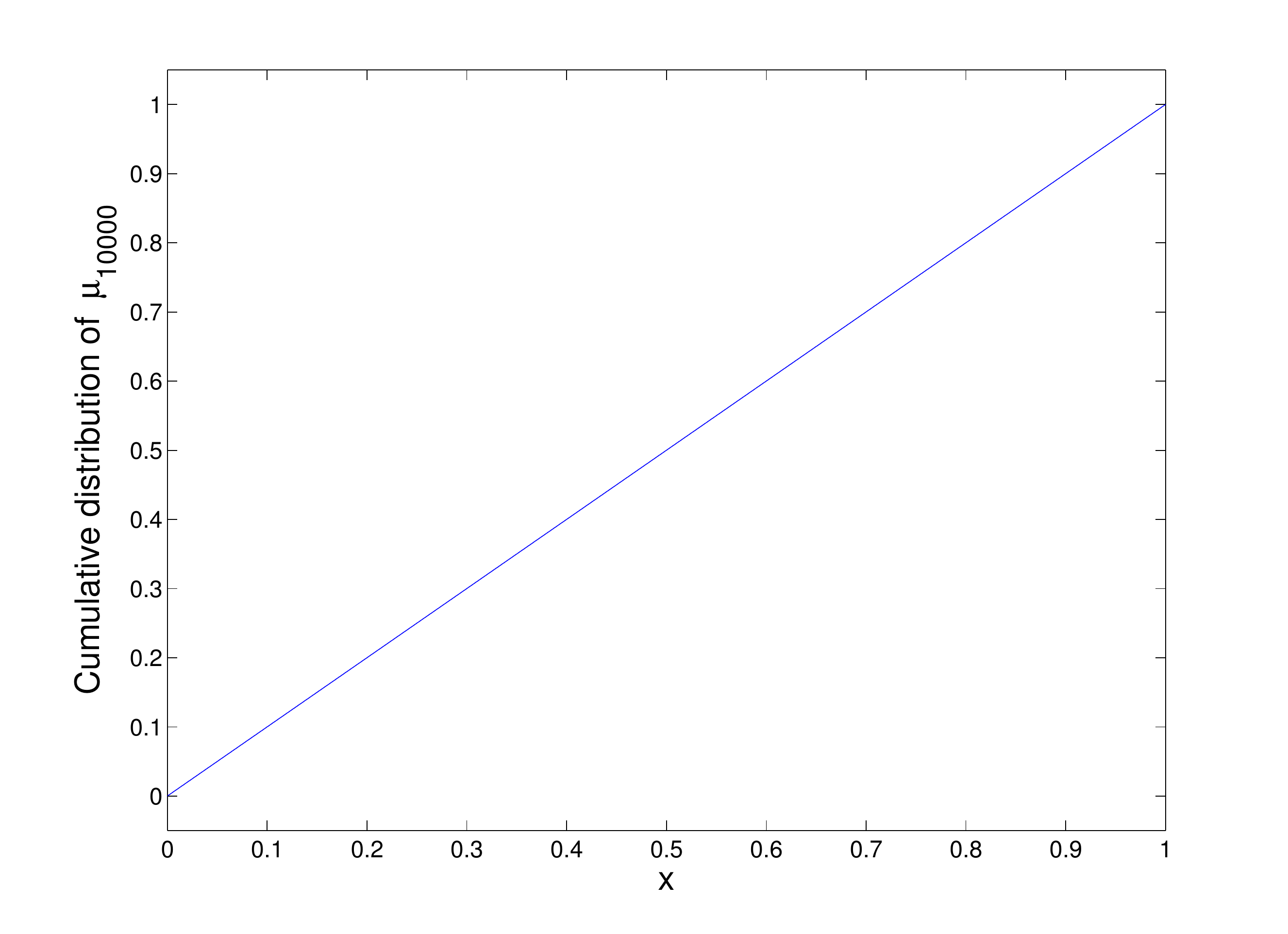}\\

  \caption{Ulam approximations with $k=10000$ for $\hat{T}_\beta$ ($\beta=5.9$) 
and $H_0=\emptyset$. 
Left: Graph of approximate density $h_k$ of acim.
Right: The ``quasi-conformal'' measure, depicted as $\mu_k([0,x])$ vs. $x$.
Note that $\mu_k$ approximates Lebesgue measure on $[0,1]$, 
as $(\hat{T}_\beta,\emptyset)$ is closed.}\label{fig1a}\label{fig1b}
\end{figure}

\begin{figure}

\centering 
\includegraphics[width=7cm]{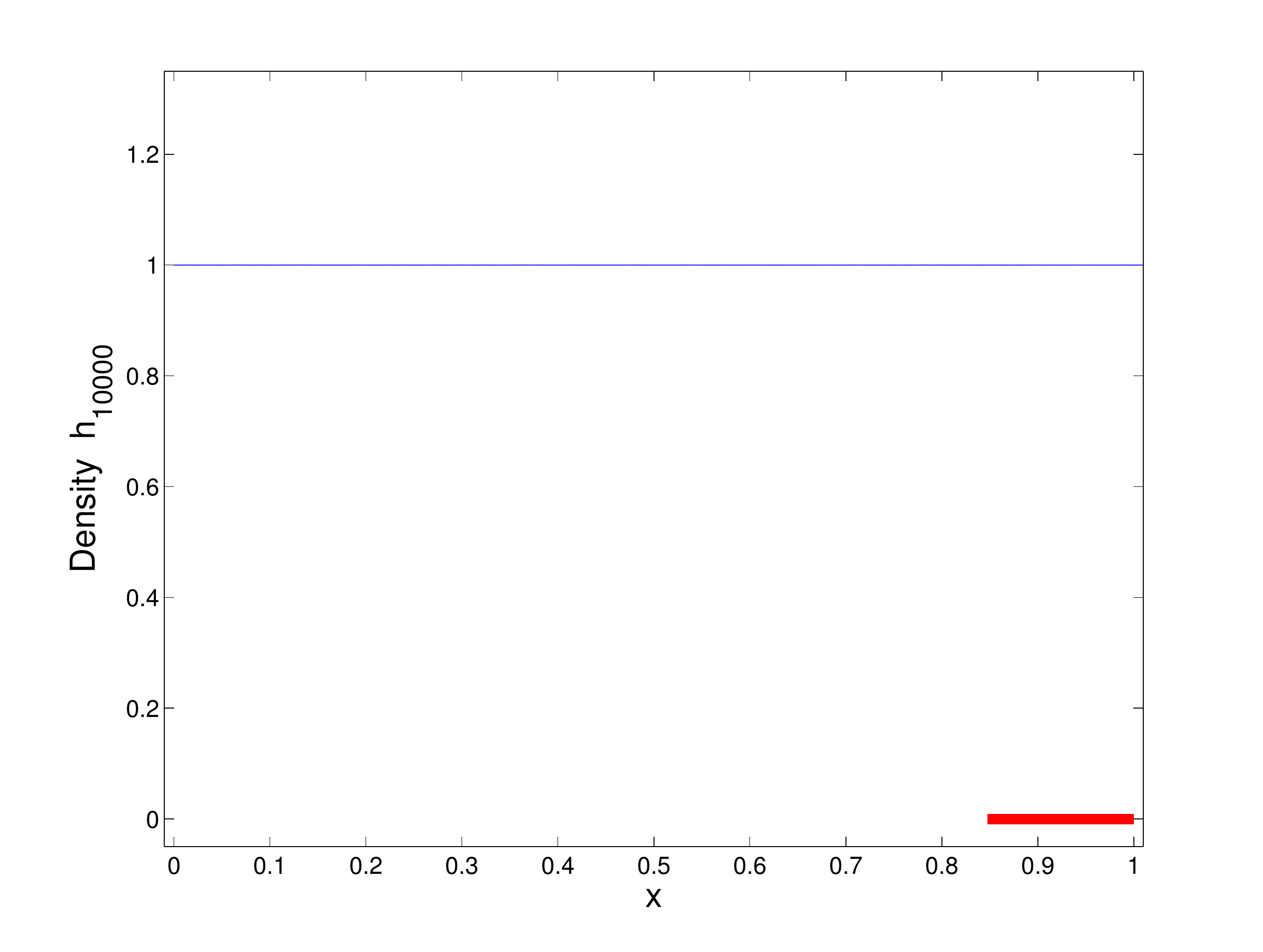}
\qquad\includegraphics[width=7cm]{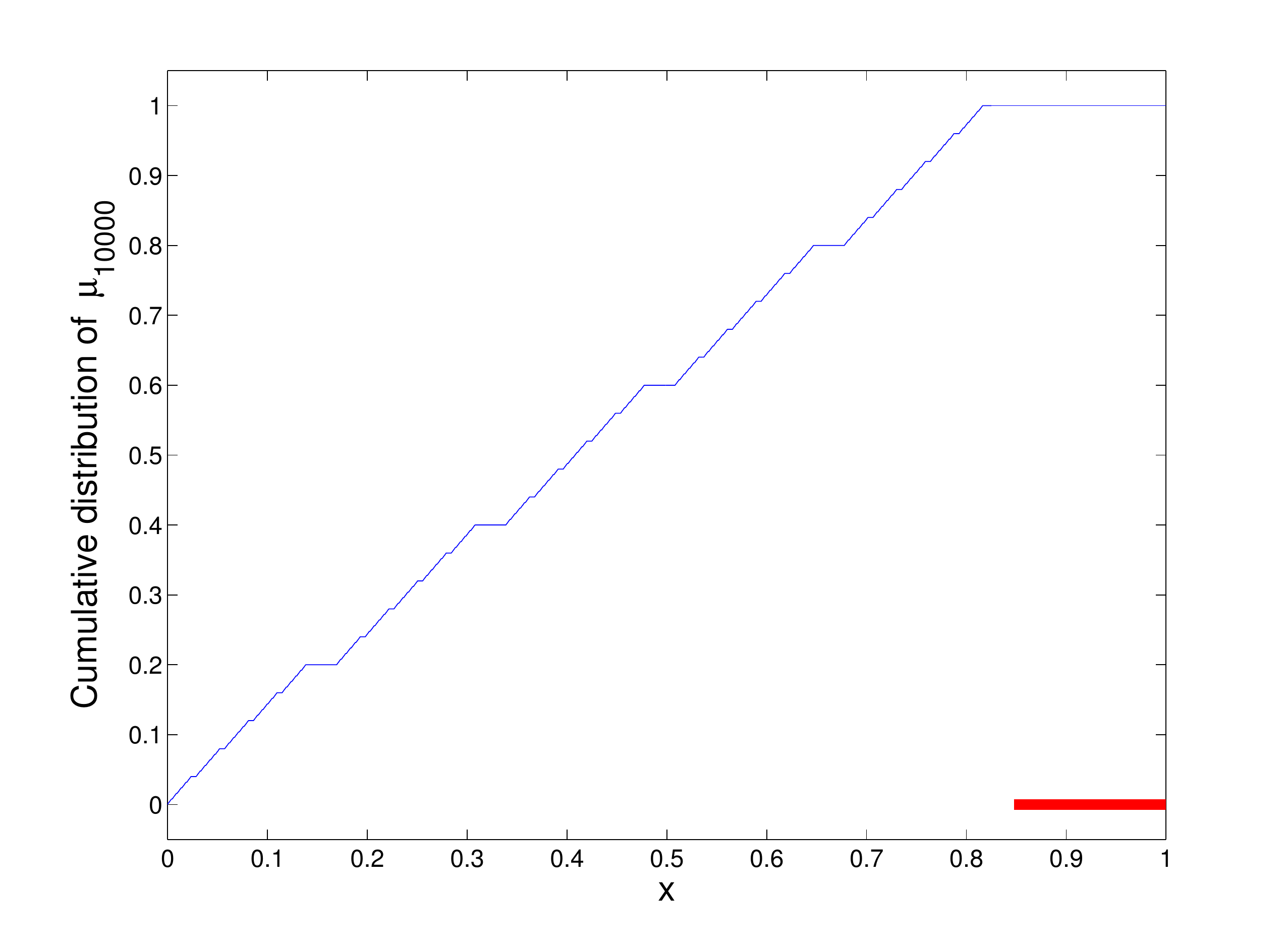}\\

  \caption{Ulam approximations with $k=10000$ for $\hat{T}_\beta$ ($\beta=5.9$) 
and $H_0=[\frac{5}{5.9},1]$ (shown in red). The computed 
value of $\rho_k$ is 0.8475 (4 s.f.), and in fact agrees up to 11 s.f. with
the exact value for $\rho$ (the length of $X_0$: $5/5.9$).
Left: Graph of computed density $h_k$ of accim (note that the function $1$
is a fixed point of both $\mcl$ and $\pi_k\mcl$).
Right: The approximate quasi-conformal measure, 
depicted as $\mu_k([0,x])$ vs. $x$. Note that $\mu_k$ has no support on $H_0$.
}\label{fig1c}\label{fig1d}
\end{figure}

\begin{figure}

\centering \includegraphics[width=7cm]{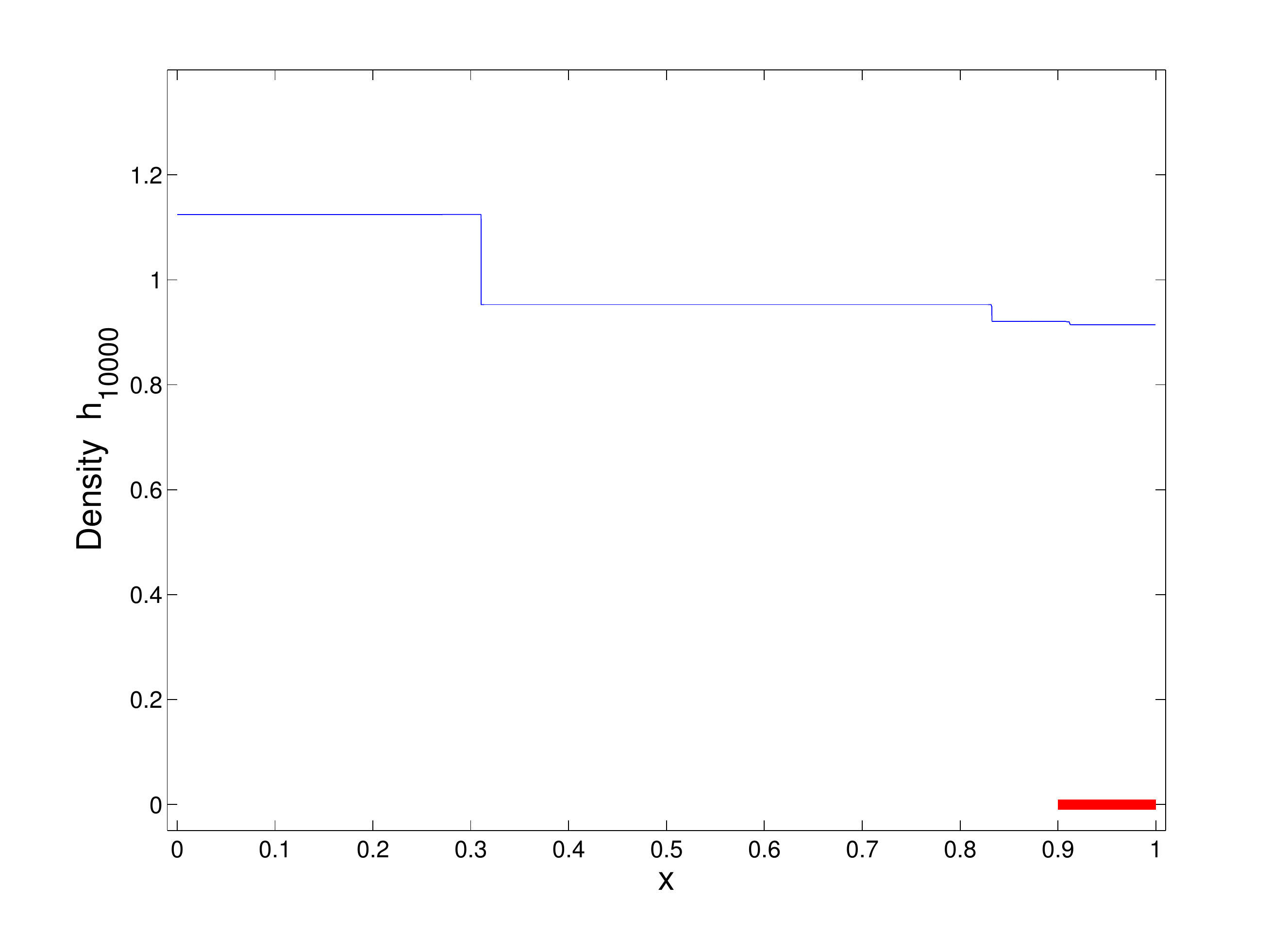}
\qquad\includegraphics[width=7cm]{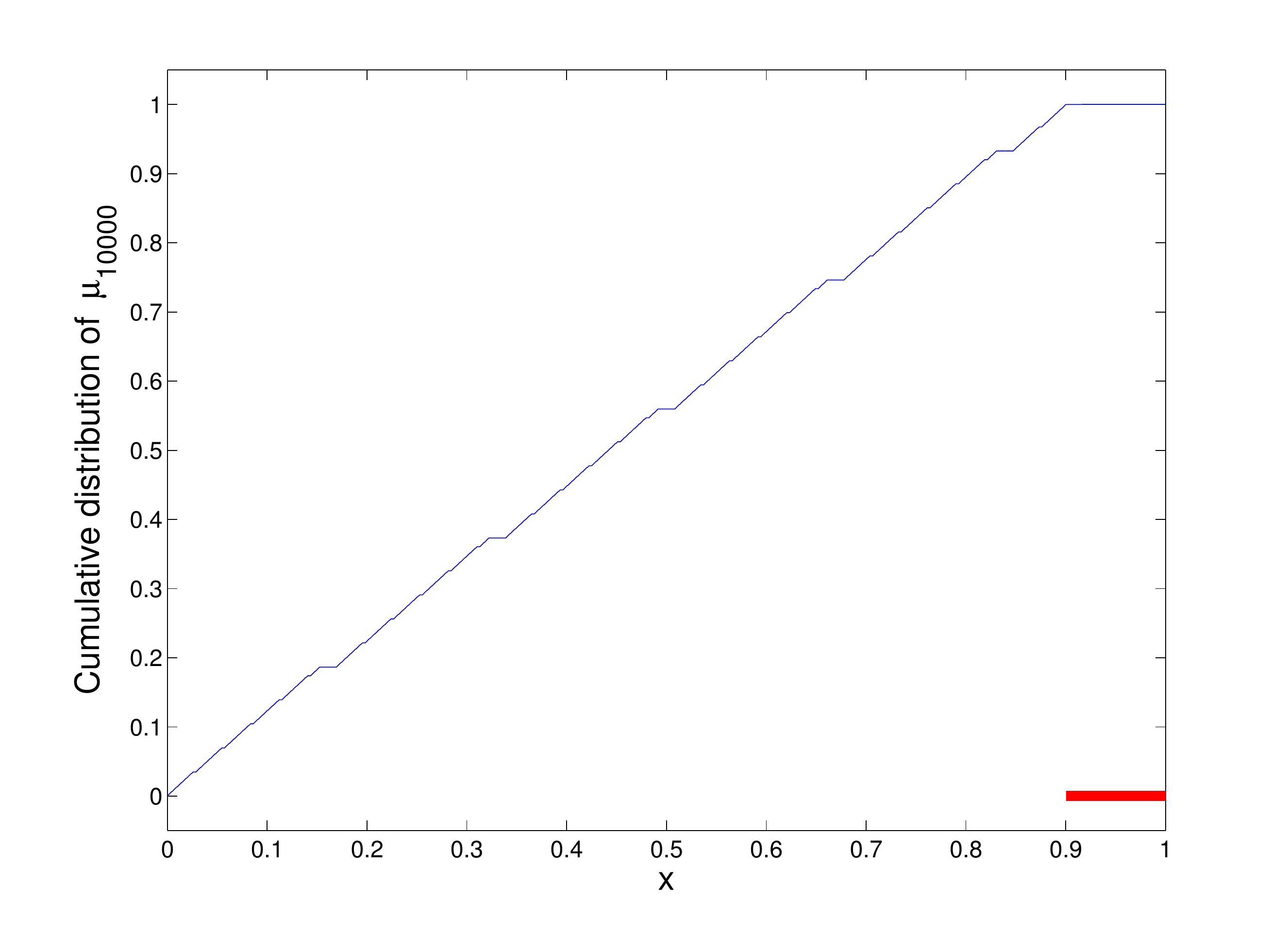}\\

  \caption{Ulam approximations with $k=10000$ for $\hat{T}_\beta$ ($\beta=5.9$) 
and $H_0=[0.9001,1]$ (shown in red). The computed value of $\rho_k$ is 
0.9086 (4 s.f.)
Left: Graph of approximate density $h_k$ of accim.
Right: The approximate quasi-conformal measure, 
depicted as $\mu_k([0,x])$ vs. $x$. Note that $\mu_k$ has no support on $H_0$.}
\label{fig1g}\label{fig1h}
\end{figure}

\end{ex}

\begin{rmk}
Using lower bounds on $\rho$ such as those of Lemma~\ref{lem:FullBrancheMapswHoles}, one can extend the conclusion of Lemma~\ref{lem:manyFullBranches} as in Corollary~\ref{cor:pwLinearFullBranched}, to cover small $C^{1+Lip}$ perturbations of piecewise linear maps that respect the partition $\mc{Z}_h \cup \mc{Z}_f \cup \mc{Z}_u$.
\end{rmk}

\subsection{Lorenz-like maps}\label{s:Lorenzlike}
Let us consider the following two-parameter family of maps of $I=[-1,1]$:
\begin{equation}\label{eq:LorenzMap}
 T_{c,\al}(x)=\begin{cases} cx^{\al}-1 \quad &\text{ if } x>0, \\ 1-c|x|^{\al} \quad &\text{ if } x<0, \end{cases}
\end{equation}
where $c>0, \al \in (0,1)$.
When $c>2$, the system is open and the hole is implicitly defined as $H_{0, c, \al}=T_{c, \al}^{-1}(\R\setminus [-1,1])$.

\begin{figure}
\centering
  \includegraphics[width=10cm]{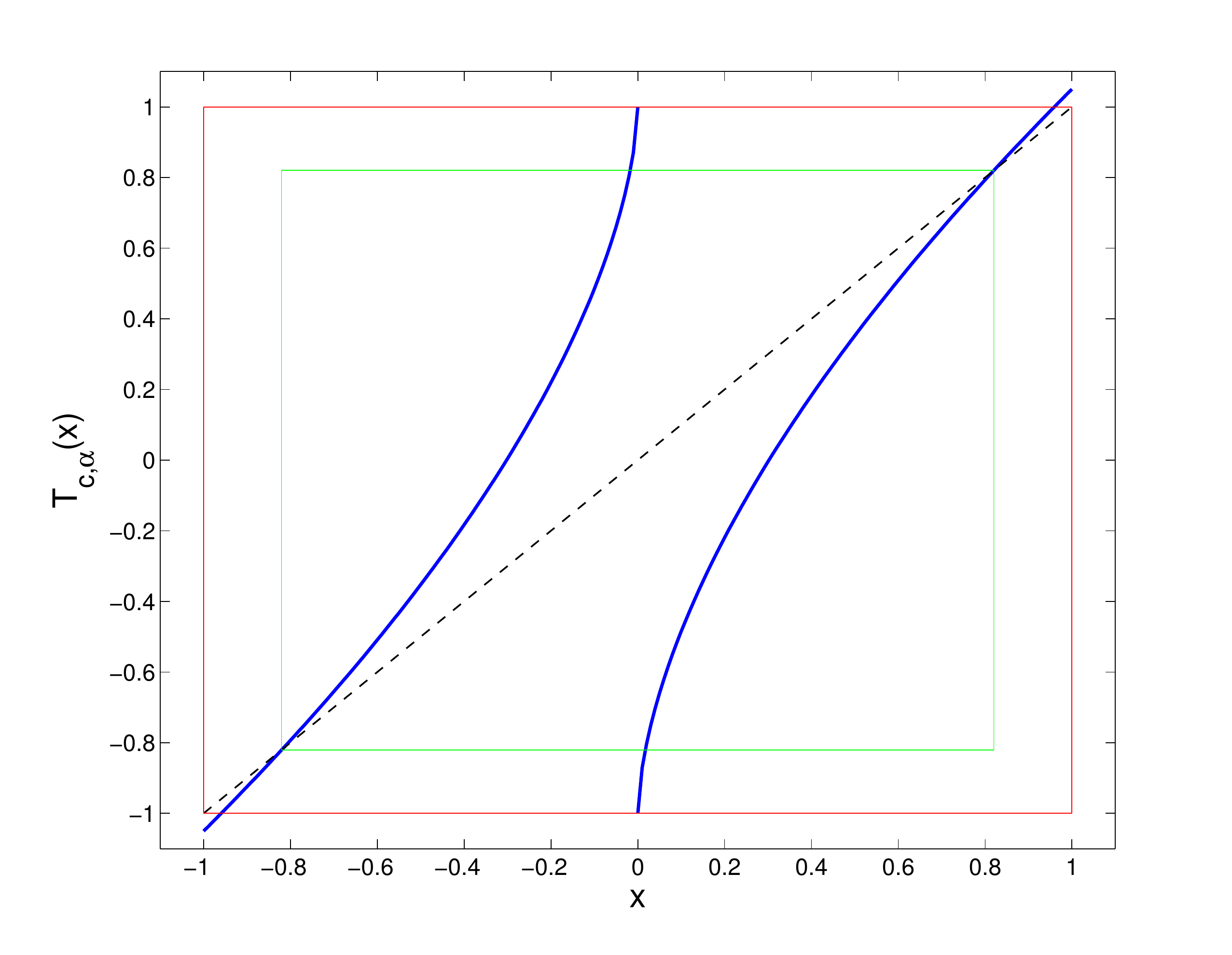}\\
  \caption{Lorenz map~$T_{c,\alpha}$, $c=2.05$, $\al=.6$ (blue). Note that
$[-1,1]\subsetneq T_{c,\alpha}[-1,1]$; bounds of the interval $[-1,1]$ are 
depicted in red (the branches of $T$ extend beyond the red box). The chaotic
repelling set is confined to the interval between two fixed points (green).}
\label{fig:lorenz_map}
  \end{figure}

This family of maps has been studied in connection with the famous Lorenz equations,
\begin{eqnarray}\label{eq:LorenzEq}
\dot{x} &=& \sig (y- x) \nonumber \\  
\dot{y} &=&r x -y -x z \\  
\dot{z} &=&-bz+xy. \nonumber
\end{eqnarray}
We take a relatively standard point of view~\cite{KaplanYorke, Sparrow, GuckenheimerHolmes}, regarding
 $\sig=10$ and $b=8/3$ as fixed, and $r$ as a  parameter. The chaotic 
attractor discovered by Lorenz~\cite{Lorenz63} at $r=28$ has since 
been proved to exist by Tucker~\cite{Tucker} (via computer-assisted methods).
Its formation is now well understood: A 
{\em homoclinic explosion\/} occurs at
$r_{hom}\approx 13.9265$, giving rise to a {\em chaotic saddle\/}. 
As $r$ increases through $r_{het}\approx  24.0579$, heteroclinic 
connections between $(0,0,0)$ and a symmetric pair of periodic 
orbits~$\Gamma^\pm$ appear and the chaotic saddle becomes an 
attractor~$\Omega$. The orbits $\Gamma^\pm$ disappear in subcritical 
Hopf bifurcations at $r_{Hopf}\approx 24.7368$ (parameter values 
from~\cite{DoedelKrauskopfOsinga06}). For $r<r_{het}$ almost all orbits 
are asymptotic to one of two fixed points; for $r_{het}<r<r_{Hopf}$ 
orbits may approach one of these fixed points, or the attractor~$\Omega$; 
for $r>r_{Hopf}$ almost all orbits are attracted to $\Omega$.

Maps like \eqref{eq:LorenzMap} model this situation
via the following reductions. First, solutions to the ODEs~\eqref{eq:LorenzEq} induce a flow on $\R^3$; from this, a {\em return map\/} to the section $\Sigma=\{(x,y,z)~:~z=r-1\}$ may be constructed. This two-dimensional map is an open dynamical system, since not all orbits return to $\Sigma$\footnote{For example, the stable manifold to the fixed point $(0,0,0)$ intersects~$\Sigma$, and some orbits of the flow travel directly to $(0,0,0)$ after leaving~$\Sigma$.}. For `pre-turbulent' $r\in(r_{hom},r_{het})$, the chaotic
saddle admits a {\em strong stable foliation\/}; the return map to $\Sigma$ may be
further reduced by identifying points on the same stable leaf, resulting in one-dimensional
models such as~\eqref{eq:LorenzMap}. The discontinuity at $x=0$ corresponds to the intersection of the stable manifold of $(0,0,0)$ with $\Sigma$; the 
exponent $0<\al<1$ 
is derived from the eigenvalues of the linearization of the system at the origin, 
$\al=\frac{|\lam_s|}{\lam_u}$.  
The parameter $c$ controls how `open' the map is: when $c\leq 2$, the system is closed, and when $c>2$, the $1$-step 
survivor set $X_0$ has the form $X_0=[-x_{c,\al}, x_{c,\al}]$, where 
$x_{c,\al}=(2/c)^{1/\al}$; this is illustrated in red in Figure~\ref{fig:lorenz_map}.

The escape rates of  the system $T_{c,\al}$ for parameters $0<\al<1$, $2<c<3$ 
are illustrated in Figure~\ref{fig:lorenz_leading_eval}.
Figure~\ref{fig:lorenz1d_quasi-conformal}~(left)
illustrates the cumulative distribution functions of the 
quasi-conformal measures, $\mu_{c,\al}$, for $c=2.01$ and 
various values of $\al$. The densities of the accims with respect to 
Lebesgue are illustrated in Figure~\ref{fig:lorenz1d_variousalpha} for 
several $\al$ values. For  $\al<0.5$, the densities become concentrated 
near the endpoints, as the $\al=0.45$ plot in 
Figure~\ref{fig:lorenz1d_variousalpha}~(right) illustrates.

\begin{figure}
\centering
  \includegraphics[width=7cm]{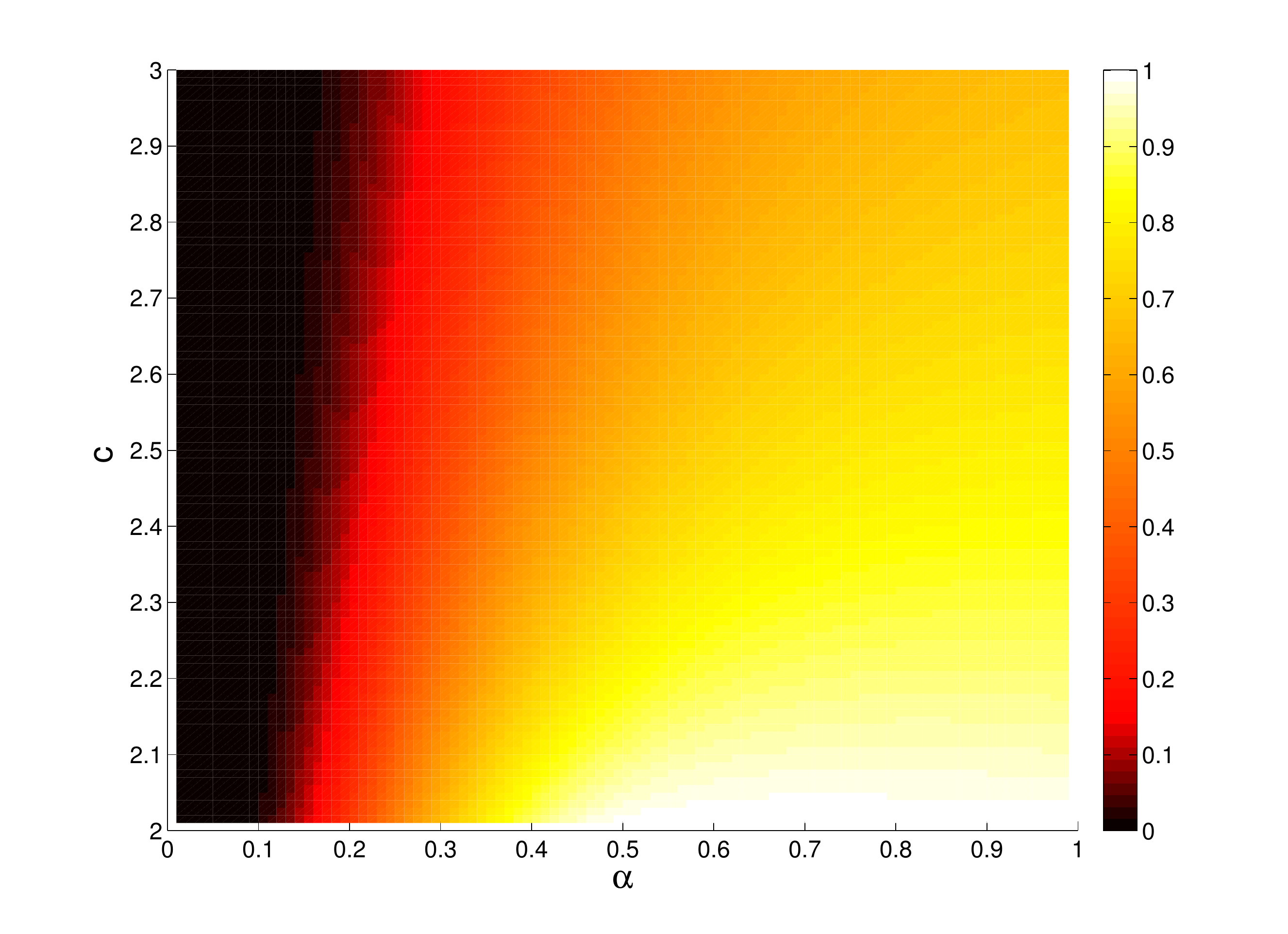}\qquad
 \includegraphics[width=7cm]{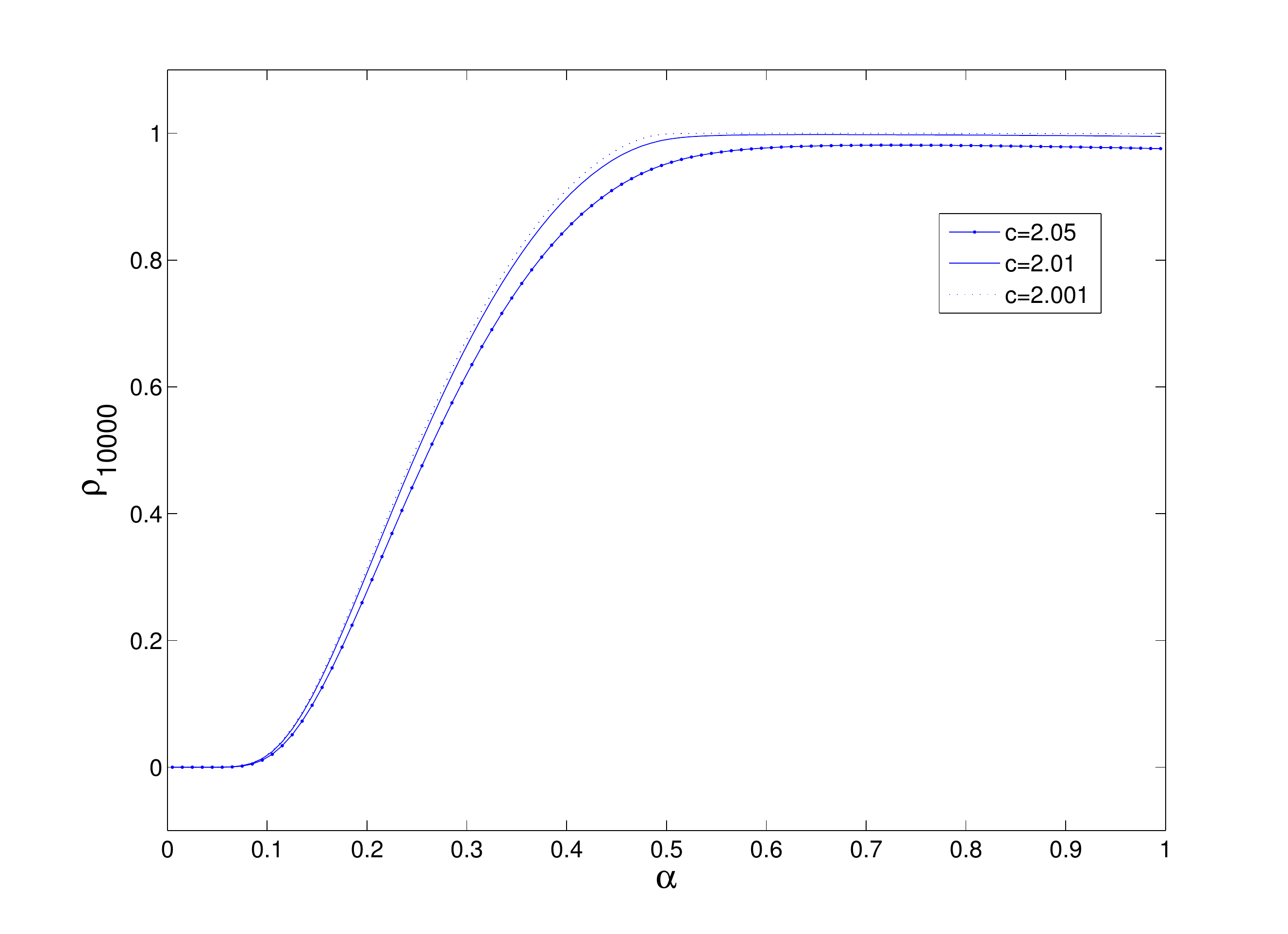}\\
  \caption{Numerical estimate of escape rates via open Ulam method with $k=10 000$ bins. Left: coloured image of leading eigenvalue $\rho_{10000}$ for a range of $\alpha$ and $c$ (light for $\rho$ near 1, dark near $0$). Right:  $\rho_{10000}$ as a function of $\alpha$ for $c=2.05,2.01,2.001$.}\label{fig:lorenz_leading_eval}
  \end{figure}

The escape rate results for these one-dimensional maps can be interpreted
coherently with respect to the behaviour of the Lorenz 
system~\eqref{eq:LorenzEq} (although the scenarios differ according 
to whether $\al\lessgtr 1/2$).
\begin{itemize}
\item Regarding $T_{c,\al}$ as a map on $\R$, for each value of $\alpha\in(0,1)$ and
$c>2$ there are two pairs of fixed points: repellors at $\pm y_{c,\alpha}\in(-1,1)$
(illustrated in green in Figure~\ref{fig:lorenz_map})
and an attracting {\em outer\/} pair $\pm z_{c,\al}$ with $|z_{c,\al}|>1$
(beyond the domain of Figure~\ref{fig:lorenz_map}). 
The {\em inner points\/} $\pm y_{c,\al}$ correspond  to the periodic orbits~$\Gamma^\pm$ from the Lorenz flow, and the outer pair correspond to the
attracting fixed points of the flow. 
\item At some $c=c_*(\al)\leq 2$ the inner and outer pairs coalesce in a saddle-node bifurcation and for $c<c_*$ the only attractor is a chaotic absolutely continuous invariant measure supported on $[-1,1]$.
\item[($\al>1/2$)] Each $T_{c,\al}$ is uniformly expanding on $X_0$ for $c>2$. For
$c>2$ there is a chaotic repellor in $X_0$, and a fully supported accim on $[-1,1]$.
Lebesgue a.e. orbit escapes and is asymptotic to one of the ``outer fixed points''. At
$c=2$ the points $\pm x_{c,\al}=\pm1=\pm y_{c,\al}$ become fixed points, 
with $T'(\pm 1)=2\,\al>1$. The open system thus `closes up' as $c$ decreases to $2$;
this corresponds to the bifurcation point $r_{het}$ in the Lorenz flow (where the origin
connects to $\Gamma^\pm$). For values of $c<2$, $T_{c,\al}$ admits an acim 
(which can be accessed numerically by Ulam's method) and the quasiconformal measure is simply Lebesgue measure. The approach of $\rho_k$ to $1$ as $c\to 2$ can be seen in Figure~\ref{fig:lorenz_leading_eval}, and the close agreement of $\mu_{2.01,\al}$ with Lebesgue measure can be seen in 
Figure~\ref{fig:lorenz1d_quasi-conformal}~(left) for $\al=0.95$. 
\item[($\al<1/2$)] For $c>2$, $T_{c,\al}$ is open on $[-1,1]$, but the uniform expansion
property fails for $c$ sufficiently close to $2$. Indeed, when $c=2$ the fixed points at $\pm1$ are the {\em outer pair\/} $\pm z_{c,\al}$ and $T'(\pm1) <1$. 
For $c\in (c_*,2)$, these attractors $\pm z_{c,\al}\in [-1,1]$ and {\em coexist with a chaotic repellor in $[-y_{c,\al},y_{c,\al}]$\/}.
Fortunately, for $c>c_*$ the open system $T_{c,\al}$ with hole 
$I\setminus [-y_{c,\al}, y_{c,\al}]$ is a Lasota-Yorke map with holes, because it is piecewise expanding\footnote{Corollary~\ref{cor:checkQCFullBranch} shows that admissibility of the open system is implied if 
\begin{equation}\label{eq:qcLorenz}
|T_{c,\al}'(y_{c,a})|^{-1}=\sup_{x\in [-y_{c,\al}, y_{c,\al}]} |T_{c,\al}'(x)|^{-1}< \inf \mcl_{c,\al}1(x).
\end{equation}
This condition can be verified directly.}.
For $c>2$, our main theorem holds for the application of Ulam's method to $T_{c,\al}$ on 
$[-y_{c,\al},y_{c,\al}]$. However, it is simple to extend this result to $[-1,1]$: all points in the intervals $\pm(y_{c,\al},1)$ escape in finitely many iterations,
and corresponding cells of the partitions used in Ulam's method are ``transient''. 
The leading eigenvalue from Ulam's method and approximate quasi-conformal measure on 
$[-y_{c,\al},y_{c,\al}]$ agree with those computed on $[-1,1]$. The approximate accims agree (modulo scaling) between $\pm y_{c,\al}$, the only difference is that the different
$X_0$s lead to a different concentration of mass on preimages of the hole. The approximated escape rates are displayed in Figure~\ref{fig:lorenz_leading_eval},
and concentration of accim on the hole (neighbourhoods of $\pm1$) is evident in Figure~\ref{fig:lorenz1d_variousalpha}~(right). Note also that 
Figure~\ref{fig:lorenz1d_quasi-conformal}~(left) depicts some 
approximate quasiconformal measures for $c=2.01$ and $\alpha<0.45$.
\end{itemize}

\begin{figure}
\centering
  \includegraphics[width=7cm]{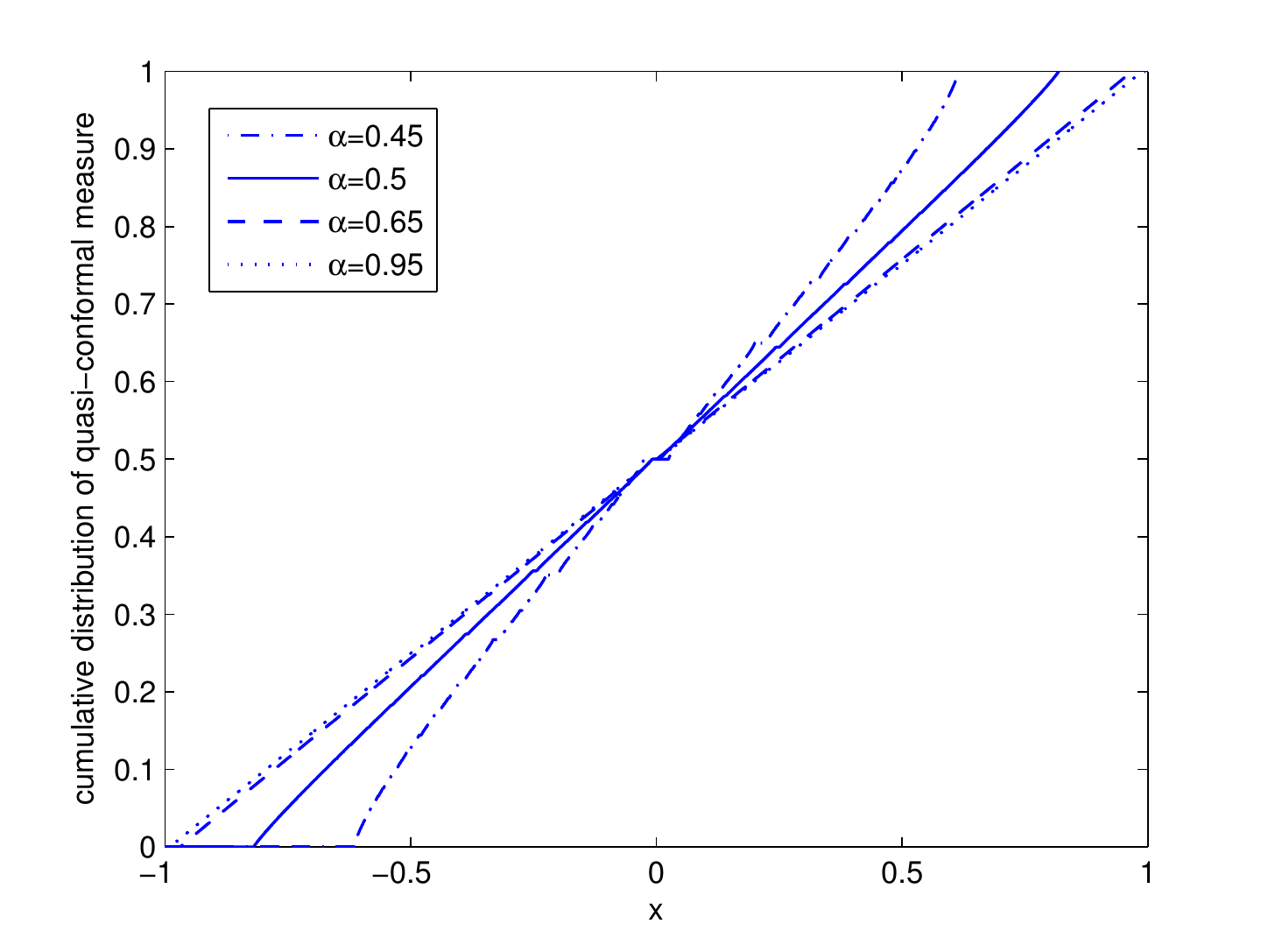}\qquad
  \includegraphics[width=7cm]{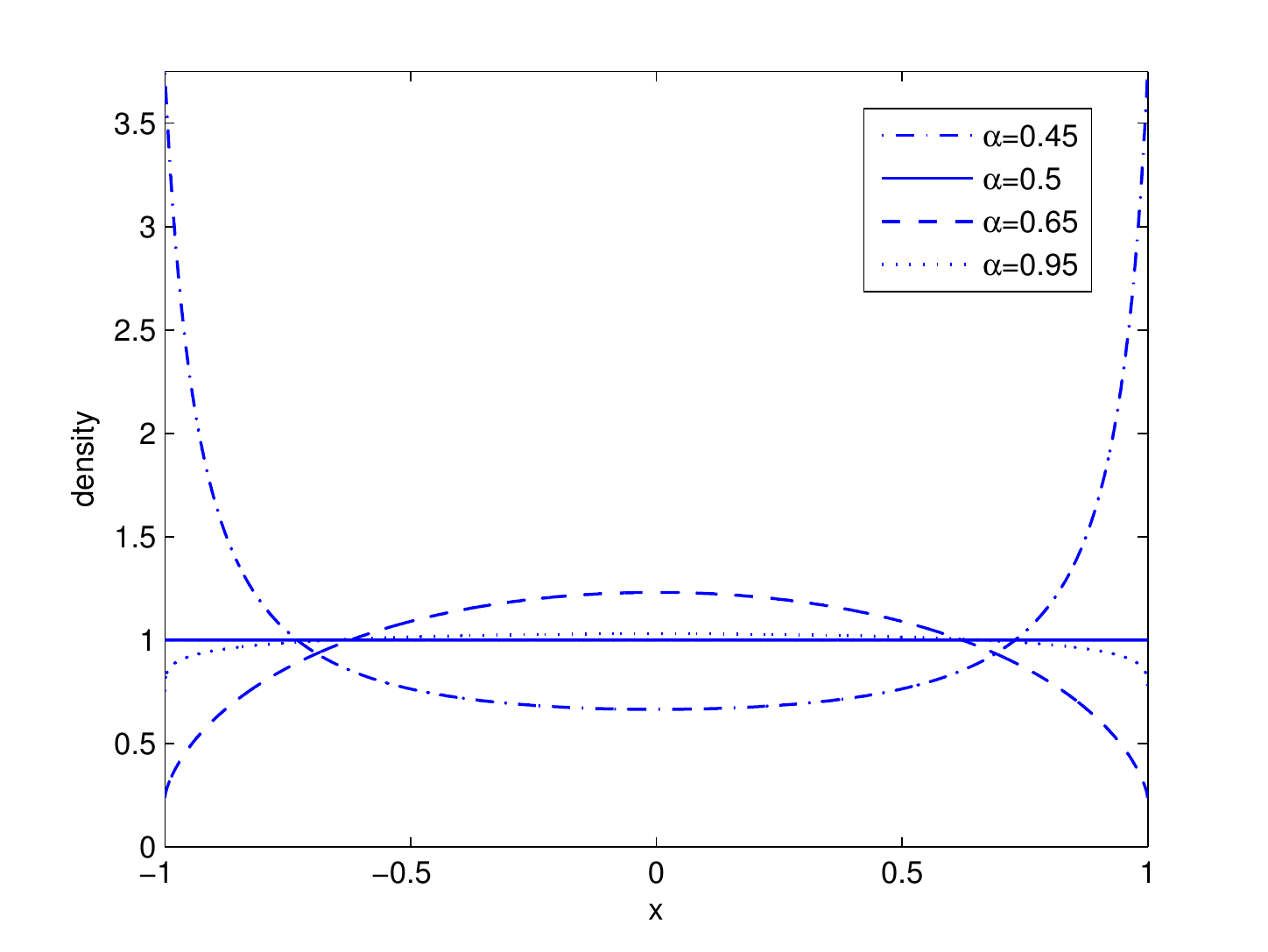}\\

  \caption{Open Ulam approximations for $T_{c,\al}$ ($k=20000$).
Left: cumulative distribution functions for $\mu_{2.01, \al}$ 
where $\al=0.45,0.5, 0.65, 0.95$. Right: accims for $T_{2.01,\al}$ 
(same $\al$).}
\label{fig:lorenz1d_quasi-conformal}
\label{fig:lorenz1d_variousalpha}
  \end{figure}

\section{Proofs}\label{S:pfs}

\subsection{Auxiliary lemmas}\label{subS:AuxLem}
Under the assumptions of Theorem~\ref{thm:LiveMaume}, the quasi-conformal measure $\mu$  of $(\hat{T}, H_0)$ satisfies some further properties that will be exploited in our approach. The measure $\mu$ can be used to define a useful cone of functions in $BV$. For each $a>0$ let
$$\mc{C}_a=\{0\leq f\in BV~:~\var(f)\leq a \mu(f)\}.$$

Combining the result of Lemmas~4.2 and 4.3 from~\cite{LiveMaume} with the argument in the proof of Lemma~3.7 (therein), the conditions 
on~$T$ imply the existence of  a constant $a_1>0$ such that for any $a>a_1$ there is an $\epsilon_a>0$ and $N\in\mathbb{N}$  such that
\begin{equation}\label{eqn:LYcone}\ 
\mcl^N\mc{C}_a \subseteq \mc{C}_{a-\epsilon_a}.
\end{equation}
The values of $N$, $a_1$ and $\epsilon_a$ are all computable in terms of the constants associated with $T$.
We present a modified version of these arguments, based on the classical work of Rychlik \cite{Rychlik}, that specialize to the case $N=1$, and allow us to improve some of the constants involved in the estimates of \cite{LiveMaume}.

\begin{lem}\label{lem:LY1step}
Let $(\hat{T}, H_0)$ be a Lasota-Yorke map with an $\ep$-Ulam-admissible hole.
Then, there exists $K_\ep>0$ such that for every $f\in BV$,
\[
  \var(\mcl f) \leq \al_\ep \var(f)+K_\ep \mu(|f|).
\]
Furthermore, there is a constant $a_1>0$ such that for any $a>a_1$ there is an $\epsilon_a>0$ such that
\begin{equation}\label{eqn:LYcone1step}\ 
\mcl\mc{C}_a \subseteq \mc{C}_{a-\epsilon_a}.
\end{equation}
\end{lem}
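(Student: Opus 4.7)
The plan is to first establish the Lasota-Yorke inequality via a refined Rychlik-style argument, and then deduce the cone invariance as a direct corollary using $\mu(\mcl f) = \rho\mu(f)$ from Theorem~\ref{thm:LiveMaume}. For the inequality, I would select an $\ep$-adequate partition $\mc{Z}_\ep$ whose maximal run of contiguous bad intervals is at most $\xi_\ep$ (up to an arbitrarily small error, by definition of the infimum). Writing $g = 1_{X_0}|DT|^{-1}$ and $\mcl f = \sum_{A \in \mc{Z}_\ep^*} \mcl_A f$ with $\mcl_A f := ((fg)\circ (T|_A)^{-1})\cdot 1_{T(A)}$, the standard pointwise estimate gives
\[ \var_I(\mcl_A f) \leq \var_A(fg) + |(fg)(x_A^-)| + |(fg)(x_A^+)|.\]
Summing over $A$ while carefully combining boundary terms at shared boundaries of adjacent $X_0$-pieces where $\hat T$ is continuous (so $g$ is continuous)---at such points the two one-sided boundary contributions partially cancel, the net jump being the jump of $fg$, which is bounded by $\|g\|_\infty$ times the corresponding jump of $f$ and thus absorbed into $\|g\|_\infty \var(f)$---should produce a coefficient of $(2+\ep)\|g\|_\infty$ on the continuous part, using $\var_A(g) \leq \|g\|_\infty(1+\ep)$ from $\ep$-adequacy, with a residual $\sum_A \sup_A|f|$ to control.

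I would bound this residual using the good/bad dichotomy of $\mc{Z}_\ep^*$: group each maximal run of contiguous bad intervals with an adjacent good interval, producing clusters each containing a single good $G$ (of positive $\mu$-measure by definition) and at most $\xi_\ep$ bad intervals. On each good $G$, $\inf_G|f| \leq \mu(G)^{-1}\int_G|f|\,d\mu$; on each bad $B$ in the cluster of $G$, $\sup_B|f| \leq \inf_G|f| + \var_{\mathrm{cluster}}(f)$. Summing, the bad contributions produce an additive $\xi_\ep\|g\|_\infty\var(f)$ in the final coefficient together with a $K_\ep\mu(|f|)$ term, with $K_\ep$ depending on $\min_G \mu(G) > 0$. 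This delivers
\[ \var(\mcl f) \leq \al_\ep \var(f) + K_\ep \mu(|f|), \quad \al_\ep = \|DT^{-1}\|_\infty(2+\ep+\xi_\ep).\]

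The cone invariance follows immediately. For $f \in \mc{C}_a$ we have $f \geq 0$, $\var(f) \leq a\mu(f)$, and $\mu(|f|) = \mu(f)$. Using $\mu(\mcl f) = \rho\mu(f)$,
\[ \var(\mcl f) \leq (\al_\ep a + K_\ep)\mu(f) = \frac{\al_\ep a + K_\ep}{\rho}\mu(\mcl f).\]
Set $a_1 := K_\ep/(\rho - \al_\ep)$, which is positive by $\ep$-Ulam-admissibility ($\al_\ep < \rho$). For any $a > a_1$ one checks $(\al_\ep a + K_\ep)/\rho < a$, so $\mcl f \in \mc{C}_{a-\epsilon_a}$ with $\epsilon_a := (a(\rho-\al_\ep) - K_\ep)/\rho > 0$; positivity of $\mcl f$ is automatic since $\mcl$ is a positive operator.

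The main obstacle is obtaining the sharp constant $\al_\ep$. A naive application of Rychlik---treating every partition endpoint as an independent boundary jump bounded by $\|g\|_\infty\sup_A|f|$, then invoking $\sup_A|f| \leq \inf_A|f| + \var_A(|f|)$ piecewise---produces a constant of order $(3+\ep)(1+\xi_\ep)$, multiplicative rather than additive in $\xi_\ep$, which is too weak for Ulam-admissibility to yield a useful cone contraction. The two essential refinements are (i) recognizing that shared internal boundaries within an $X_0$-cluster contribute only the jump of $fg$ (absorbed into $\var(f)$, not $\sup|f|$), and (ii) arranging the cluster bound on $\sum_A \sup_A|f|$ so that $\xi_\ep$ enters the coefficient additively with $(2+\ep)$ rather than multiplicatively. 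Together these give the sharp $\al_\ep$ in the statement.
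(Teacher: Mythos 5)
Your overall strategy is the same as the paper's: first a refined Rychlik-type Lasota--Yorke inequality, with the residual controlled by comparing bad partition elements to the nearest good element (where $\mu$ is bounded below), and then cone invariance via $\mu(\mcl f)=\rho\mu(f)$. The cone-invariance half of your argument is essentially word-for-word what the paper does (identifying $a_1=K_\ep/(\rho-\al_\ep)$ and $\ep_a=(a(\rho-\al_\ep)-K_\ep)/\rho$).

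There is, however, an internal inconsistency in your sketch of the Lasota--Yorke step, and it matters for the constant. You assert that after summing branch contributions you land on a coefficient $(2+\ep)\|g\|_\infty$ on $\var(f)$ together with a residual $\sum_A\sup_A|f|$. But the $(2+\ep)$ factor already presupposes that, inside the estimate $\var_A(fg)\le\|g\|_\infty\var_A(f)+\|f\|_{\infty,A}\var_A(g)$, one has replaced $\|f\|_{\infty,A}$ by $\inf_A|f|+\var_A(f)$ and absorbed the extra $\var_A(f)$ into the main term; after that absorption the residual is $\sum_A\inf_A|f|$ (with coefficient $(1+\ep)\|g\|_\infty$), \emph{not} $\sum_A\sup_A|f|$. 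If you instead carry $\sum_A\sup_A|f|$ forward and only later apply $\sup_G|f|\le\inf_G|f|+\var_G(f)$ for good $G$ and $\sup_B|f|\le\inf_G|f|+\var_{\mathrm{cluster}}(f)$ for bad $B$, you produce $(1+\xi_\ep)\var(f)$ rather than $\xi_\ep\var(f)$, and the total variation coefficient becomes roughly $(3+\ep+\xi_\ep)\|DT^{-1}\|_\infty$ rather than $\al_\ep=(2+\ep+\xi_\ep)\|DT^{-1}\|_\infty$. That extra unit of $\|DT^{-1}\|_\infty$ is not harmless: the cone contraction would then require $\rho>(3+\ep+\xi_\ep)\|DT^{-1}\|_\infty$, which is strictly stronger than $\ep$-Ulam-admissibility. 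So the residual must be tracked as $\sum_A\inf_A|f|$ from the start. Your boundary-cancellation observation (net contribution at a shared interior boundary is only the jump of $fg$) is correct and is implicitly exploited in the paper by working with the extended weight $\hat g$ that vanishes at monotonicity endpoints and writing $\var(\hat{\mcl}f)\le\var(f\hat g)$; it does not by itself substitute for the $\inf$-vs-$\sup$ bookkeeping above. Once that is corrected, your cluster argument (one good $G$, at most $2\xi_\ep$ bad attached, nested-interval variation bound) coincides with the paper's treatment and delivers the stated inequality.
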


\begin{proof}
Let $\mc{Z}$ be the monotonicity partition for $\hat{T}$. Define $\hat{g}:I \to \R$ by
$\hat{g}(x)=|D\hat{T}(x)|^{-1}$ for every $x\in \Big(I\setminus \bigcup_{Z\in \mc{Z}} \partial Z \Big) \cup \{0,1\}$, and $\hat{g}(x)=0$ otherwise.
We obtain the following Lasota-Yorke inequality by adapting 
the approach of Rychlik  \cite[Lemmas 4-6]{Rychlik}. Let $\mathcal{Z}_\epsilon  \in \mathcal{G}_\epsilon$. 
Then,
\[
  \var (\hat{\mcl} f)\leq \var (f \hat{g}) \leq 
  (2+\ep)\|D\hat{T}^{-1}\|_{\infty}\var(f)+ \|D\hat{T}^{-1}\|_{\infty}(1+\ep) \sum_{A\in \mc{Z}_\ep} \inf_{A} |f|.
\]
We slightly modify $\hat{g}$ to account for the jumps at the hole $H_0$, and define
$g:I \to \R$ by $g=1_{X_0}\hat{g}$. 
Now, only elements of $\mathcal{Z}_\ep^*$ contribute to the variation of                                                                                   
$\hat{\mc{L}}f$, and we get
\begin{align*}
\var (\mcl f) &= \var (\hat{\mcl} (1_{X_0} f)) \leq \var (f (1_{X_0}\hat{g}))=\sum_{A\in \mc{Z}_\ep^*}\var_{A} (f (1_{X_0}\hat{g}))\\
&\leq \sum_{A\in \mc{Z}_\ep^*}\var_{A} (f) \|1_{X_0}\hat{g}\|_{\infty}+  \|1_A f\|_{\infty} \var_A(1_{X_0}\hat{g})\\
&\leq \sum_{A\in \mc{Z}_\ep^*}\var_{A} (f) \|DT^{-1}\|_{\infty}+  \Big(\inf_A |f| +\var_A(f)\Big) \var_A(g).
\end{align*}
Thus, since for every $A \in \mc{Z}_\ep^*$, $\var_A(g) \leq \|DT^{-1}\|_{\infty}(1+\ep)$, one has that
\begin{equation}\label{eq:LYRychlik}
\var (\mcl f)
\leq (2+\ep)\|DT^{-1}\|_{\infty} \var(f)+ \sum_{A\in \mc{Z}_\ep^*} \|DT^{-1}\|_{\infty}(1+\ep) \inf_A |f|.
\end{equation}
Now we proceed as in the proof of \cite[Lemma 2.5]{LiveMaume}, and observe that there exists $\del>0$ such that if $A\in \mc{Z}_{\ep,g}$, then
\begin{equation}\label{eq:infMu}
  \inf_A |f| \leq \del^{-1}\mu(1_A|f|),
\end{equation}
whereas if $A\in \mc{Z}_{\ep,b}$, we let  $A'\in \mc{Z}_{\ep,g}$ be the nearest good partition element\footnote{It is shown in \cite[Lemma 2.4]{LiveMaume} that whenever $(T,H_0)$ is an open system with an admissible hole, then $\mc{Z}_{\ep,g}\neq \emptyset$}, and get
\[
  \inf_A |f| \leq \inf_{A'} |f| + \var_{I(A,A')}(f),
\]
where $I(A,A')$ is an interval that contains $A$ and has as an endpoint $x_{A'}\in A'$, fixed in advance, such that, after possibly redefining $f$ at the discontinuity points of $f$, $|f(x_{A'})|=\inf_{A'}|f|$. 
Notice that either $I(A,A')\subseteq I_-(A')$ or $I(A,A')\subseteq I_+(A')$, where
$ I_+(A')$ is the union of $A'_+:=A' \cap\{x: x\geq x_{A'}\} $ with the contiguous elements of $\mc{Z}_{\ep,b}$ on the right of $A'$, and $ I_-(A')$ is defined in a similar manner.
Thus,
\begin{equation}\label{eq:infBad}
  \sum_{A\in \mc{Z}_{\ep,b}}\inf_A |f| \leq \xi_\ep \var(f) + 2\xi_\ep  \sum_{A'\in \mc{Z}_{\ep,g}}\inf_{A'} |f|,
\end{equation}
where the factor 2 appears due to the fact that a single good interval could have at most $\xi_\ep$ bad intervals on the left and $\xi_\ep$ bad intervals on the right.  Combining equations \eqref{eq:infMu} and \eqref{eq:infBad}, we get
\[
  \sum_{A\in \mc{Z}_{\ep}^*} \inf_A |f| \leq  \xi_\ep\var(f)+\del^{-1}(1+2\xi_\ep) \sum_{A'\in \mc{Z}_{\ep,g}} \mu(1_{A'}|f|).
\]
Plugging back into \eqref{eq:LYRychlik}, we get
\begin{align*}
\var (\mcl f)
&\leq \|DT^{-1}\|_{\infty}(2+\ep+\xi_\ep) \var(f)+ \|DT^{-1}\|_{\infty}(1+\ep)\del^{-1}(1+2\xi_\ep)\mu(|f|).
\end{align*}
We get the first part of the lemma by choosing $K_\ep=\|DT^{-1}\|_{\infty}(1+\ep)\del^{-1}(1+2\xi_\ep)$.
For the second part, we recall that $\mu(\mcl f)=\rho \mu(f)$, so for every $f\in \mc{C}_a$, we have that
\begin{align*}
  \frac{\var(\mcl f)}{\mu(\mcl f)} \leq \frac{\al_\ep}{\rho}a+ \frac{K_\ep}{\rho}.
\end{align*}
Thus, $\mcl f \in \mc{C}_a$, provided $a>\frac{K_\ep}{\rho-\al_\ep}=:a_1$.
\end{proof}

Moving toward a $BV, L^1(\leb)$ Lasota-Yorke inequality, we have the following.
\begin{lem}\label{lem:muvsLeb}
 Let $\zeta>0$ be given. Then there is a constant $B_\zeta<\infty$ such that
$$\mu(f) \leq B_\zeta\,|f|_1 + \zeta\,\var(f),$$
for $0\leq f\in BV(I)$.
\end{lem}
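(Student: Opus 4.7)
The plan is to exploit the fact that $\mu$ is atom-free (from Theorem~\ref{thm:LiveMaume}) to partition $I$ into finitely many subintervals on each of which $\mu$ has small mass, then use the elementary $BV$ bound that controls the sup of $f$ on a short interval by its average plus its variation there.

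First I would choose a finite partition $\{J_1,\dots,J_m\}$ of $I$ into nontrivial intervals such that $\mu(J_i)\leq \zeta$ for each $i$. This is possible precisely because $\mu$ is atom-free: starting from any interval, we can keep subdividing until every piece has $\mu$-mass at most $\zeta$, and finiteness of $\mu$ guarantees the procedure terminates in finitely many steps. Since each $J_i$ is a nontrivial interval, $m(J_i)>0$.

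Next, for $0\leq f\in BV(I)$ and any $i$, a standard $BV$ estimate gives
\[
  \esssup_{J_i} f \;\leq\; \frac{1}{m(J_i)}\int_{J_i} f\,dm \;+\; \var_{J_i}(f),
\]
because $f$ differs from its mean on $J_i$ by at most the oscillation on $J_i$, which is bounded by $\var_{J_i}(f)$. Integrating against $\mu$ and summing,
\[
  \mu(f)=\sum_{i=1}^m \int_{J_i} f\,d\mu \;\leq\; \sum_{i=1}^m \mu(J_i)\Big(\frac{1}{m(J_i)}\int_{J_i} f\,dm + \var_{J_i}(f)\Big)
  \;\leq\; B_\zeta\,|f|_1 + \zeta\var(f),
\]
where $B_\zeta:=\max_i \mu(J_i)/m(J_i)<\infty$ and we used that $\sum_i \var_{J_i}(f)\leq \var(f)$ together with $\mu(J_i)\leq\zeta$.

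The only potential obstacle is the ability to choose the partition with $\mu(J_i)\leq\zeta$ and $m(J_i)>0$ simultaneously; atom-freeness of~$\mu$ handles exactly this, so the argument goes through with no further technical work.
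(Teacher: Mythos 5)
Your proof is correct, and it reaches the conclusion by a slightly different route than the paper. The core estimate is the same in both arguments: majorize $f$ cell-by-cell by (mean on the cell) $+$ (variation on the cell) and use that each cell has $\mu$-mass at most $\zeta$. The difference is in how the partition into cells of small $\mu$-mass and positive Lebesgue measure is produced. You invoke atom-freeness of $\mu$ (available from Theorem~\ref{thm:LiveMaume}) directly: continuity of $x\mapsto\mu([0,x])$, or your bisection-plus-compactness argument, yields finitely many nontrivial intervals $J_i$ with $\mu(J_i)\leq\zeta$, and you take $B_\zeta=\max_i \mu(J_i)/m(J_i)$. The paper instead uses the dynamical input \cite[Lemma~3.10]{LiveMaume} to find an $n$-fold monotonicity partition whose cells have $\mu$-mass below $\zeta/2$, then passes to a uniform grid of mesh $1/k$ (each grid cell meeting at most two cylinders, hence of $\mu$-mass below $\zeta$), which gives the explicit constant $B_\zeta=\zeta k$. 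Your version is more elementary and works for any atomless finite Borel measure, but it is non-constructive: without quantitative control on how fast $\mu$-masses of intervals shrink, $B_\zeta$ is not computable, whereas the paper's choice keeps the constant expressible in terms of dynamical data, in line with its emphasis on computable constants. One small point, common to both proofs: since $\mu$ can be singular with respect to $m$, the inequality $\int_{J_i} f\,d\mu\leq\mu(J_i)\,\esssup_{J_i} f$ requires fixing a good (regulated) representative of $f$, so that $f\leq\esssup_{J_i}f$ on $J_i$ off an at most countable set, which is $\mu$-null by atom-freeness; the paper's step ``$f\leq F$'' carries the same implicit convention, so this is not a gap relative to the paper's standard.
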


\begin{proof} Let $\mathcal{Z}^{(n)}$ be the $n$-fold  monotonicity partition for $T_0$
where $n$ is such that $\mu(Z)<\frac{\zeta}{2}$ for all $Z\in\mathcal{Z}^{(n)}$.
This choice is possible in view of \cite[Lemma~3.10]{LiveMaume}.
Choose $k$ such that every $\frac{1}{k}$ subinterval intersects at most two such $Z$.
Then, if $K$ is a subinterval of length~$1/k$, there are elements $Z_1,Z_2\in\mathcal{Z}^{(n)}$ such that $K\subset Z_1\cup Z_2$; hence $\mu(K)<\zeta = \zeta\,k\,m(K)$. Now let $\xi$ be a partition of $I$ into
subintervals of length~$1/k$ and put
$$F = \sum_{K\in\xi} \esssup_Kf\,\mathbf{1}_K.$$
Then, $f\leq F$ and $F-f \leq \sum_{K\in\xi}V_K(f)\,\mathbf{1}_K$, where $V_K(f)$ denotes the variation of $f$ inside the interval $K$. Thus,
$$|F-f|_1 \leq \sum_{K\in\xi}V_K(f)\,m(K)=V_I(f)/k.$$
We now estimate
\begin{align*}
\int f\,d\mu \leq \int F\,d\mu&=\sum_{K\in\xi} \esssup_Kf \mu(K)\\
&\leq \sum_{K\in\xi} \esssup_Kf \zeta \\
&=\zeta\,k\,|F|_1\\
&=\zeta\,k\,|f|_1 + \zeta\,k\,|F-f|_1\\
&\leq \zeta\,k\,|f|_1 + \zeta\,V_I(f).
\end{align*}
Putting $B_\zeta=\zeta\,k$ completes the proof.
\end{proof}

A direct consequence of Lemmas~\ref{lem:LY1step} and \ref{lem:muvsLeb} is the following.
\begin{cor}\label{cor:LYineq}
Let $\alpha_\ep<\alpha<\rho$,  where $\al_\ep$ is defined in Equation~\eqref{defnAl_ep}. Then, there exists $K>0$ such that
\begin{equation}\label{eq:LY}
\var(\mathcal{L}f) \leq \alpha\,\var(f) + K |f|_1.
\footnote{For convenience, we have dropped the $\ep$ dependence on $\al$ and $K$. This should cause no confusion in the sequel, as $\ep$ is fixed throughout the section.}
\end{equation}
\end{cor}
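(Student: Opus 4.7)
The plan is to simply chain the two preceding lemmas. Start from the conclusion of Lemma~\ref{lem:LY1step}, which gives
\[
\var(\mcl f) \leq \al_\ep\,\var(f) + K_\ep\,\mu(|f|),
\]
and then control the $\mu(|f|)$ term by Lemma~\ref{lem:muvsLeb} (noting that $|f|\geq 0$ and $|f|\in BV$ whenever $f\in BV$, with $\var(|f|)\leq\var(f)$ and $\||f|\|_1=|f|_1$).

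More concretely, given $\alpha\in(\alpha_\ep,\rho)$, the first step is to pick a reduction parameter $\zeta>0$ small enough that $\al_\ep + K_\ep\zeta \leq \alpha$, which is possible since $\alpha-\al_\ep>0$ and $K_\ep$ is a fixed constant from Lemma~\ref{lem:LY1step}. With this $\zeta$ fixed, Lemma~\ref{lem:muvsLeb} supplies the corresponding constant $B_\zeta<\infty$ such that
\[
\mu(|f|) \leq B_\zeta\,|f|_1 + \zeta\,\var(|f|) \leq B_\zeta\,|f|_1 + \zeta\,\var(f).
\]
Substituting this bound into the first inequality yields
\[
\var(\mcl f) \leq (\al_\ep+K_\ep\zeta)\,\var(f) + K_\ep B_\zeta\,|f|_1 \leq \alpha\,\var(f) + K\,|f|_1,
\]
with $K:=K_\ep B_\zeta$, which is the desired inequality.

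There is essentially no obstacle here; the work has already been done in the two preceding lemmas. The only (very minor) point worth flagging is that Lemma~\ref{lem:muvsLeb} is stated for non-negative $f$, so one must apply it to $|f|$ rather than $f$ itself and invoke the standard inequality $\var(|f|)\leq\var(f)$. The choice of $\zeta$ is dictated entirely by the gap $\alpha-\al_\ep$, and this is why the hypothesis $\alpha>\al_\ep$ (equivalently, $\ep$-Ulam admissibility, which ensures $\al_\ep<\rho$ so that such an $\alpha$ exists) is exactly what is needed.
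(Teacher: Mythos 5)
Your proof is correct and is essentially the paper's own argument: choose $\zeta$ proportional to the gap $\alpha-\al_\ep$ (the paper takes $\zeta'=(\alpha-\al_\ep)/K_\ep$ exactly), feed Lemma~\ref{lem:muvsLeb} into Lemma~\ref{lem:LY1step}, and set $K=K_\ep B_\zeta$. Your remark about applying Lemma~\ref{lem:muvsLeb} to $|f|$ with $\var(|f|)\leq\var(f)$ is a small point the paper leaves implicit, but there is no substantive difference.
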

\begin{proof}
Let $\zeta' = \frac{\alpha-\alpha_\ep}{K_\ep}$, where $K_\ep$ comes from Lemma~\ref{lem:LY1step}.
Let $B_{\zeta'}$ be given by Lemma~\ref{lem:muvsLeb}.
Then, Lemma~\ref{lem:LY1step} ensures
\begin{align*}
\var(\mathcal{L}f) &\leq \alpha_\ep\,\var(f) + K_\ep(B_{\zeta'} |f|_1 + \zeta'\,\var(f))\\
&=\alpha\,\var(f) + K |f|_1.
\end{align*}
\end{proof}

Another useful result regarding the relation between the Ulam approximations and the accim and quasi-conformal measure is the following.
\begin{lem}\label{lem:Garys} 
There exists $n>0$ such that $(P_k^n)_{ij}>0$ for all $i,j$                                                                                                                      
satisfying $\mu(I_i)>0$ and $\int_{I_j} h\ dm>0$.                                                                                                                                   
\end{lem}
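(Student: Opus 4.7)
The plan has two ingredients: exponential mixing from Theorem~\ref{thm:LiveMaume}(4) to establish integral positivity $\int_{I_j}\mcl^n\chi_i\,dm>0$, and a combinatorial itinerary argument that converts this into positivity of the matrix entry $(P_k^n)_{ij}$.

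First, I would apply Theorem~\ref{thm:LiveMaume}(4) to $f=\chi_i\in BV$ to obtain
\[
\left\|\rho^{-n}\mcl^n\chi_i - h\,\mu(I_i)\right\|_\infty \leq C\kappa^n\|\chi_i\|_{BV} \leq C\kappa^n(m(I_i)+2).
\]
Integrating over $I_j$ yields
\[
\left|\rho^{-n}\int_{I_j}\mcl^n\chi_i\,dm - \mu(I_i)\int_{I_j}h\,dm\right| \leq C\kappa^n(m(I_i)+2)\,m(I_j).
\]
For any pair $(i,j)$ with $\mu(I_i)>0$ and $\int_{I_j}h\,dm>0$, the main term on the left-hand side is a strictly positive constant. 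Since $\mc{P}_k$ is a fixed finite partition there are finitely many such admissible pairs, and the norms $\|\chi_i\|_{BV}$ are uniformly bounded; hence I can choose a single $n$, depending on $k$ but independent of the pair, so that the error falls below half the minimum (over admissible pairs) of $\mu(I_i)\int_{I_j}h\,dm$. This forces $\int_{I_j}\mcl^n\chi_i\,dm>0$ simultaneously for every admissible pair.

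Second, I would convert integral positivity to the matrix statement. By duality and $\mcl^n(\cdot)=\hat{\mcl}^n(1_{X_{n-1}}\cdot)$,
\[
\int_{I_j}\mcl^n\chi_i\,dm = m(I_i\cap X_{n-1}\cap \hat{T}^{-n}I_j).
\]
Since $(P_k^n)_{ij}=\sum_{i_1,\ldots,i_{n-1}}\prod_{l=0}^{n-1}(P_k)_{i_l i_{l+1}}$ with non-negative summands, it suffices to exhibit a single itinerary $i=i_0,i_1,\ldots,i_n=j$ along which every factor is positive. I would partition $I_i\cap X_{n-1}\cap\hat{T}^{-n}I_j$ by the itineraries $\hat{T}^l(x)\in I_{i_l}$; since the set has positive measure, some itinerary $\bar{i}$ (necessarily with $i_0=i$, $i_n=j$) carries a positive-measure piece $E_{\bar{i}}$. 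Non-singularity of $\hat{T}$ (consequence of piecewise $C^1$ with $\|D\hat{T}^{-1}\|_\infty\leq\hat{\Theta}<1$) then implies that $\hat{T}^l(E_{\bar{i}})\subseteq I_{i_l}\cap X_0\cap \hat{T}^{-1}I_{i_{l+1}}$ has positive Lebesgue measure, so $(P_k)_{i_l i_{l+1}}>0$ for each $l$, and hence $(P_k^n)_{ij}>0$.

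The main subtlety is in the combinatorial step: ensuring that the itinerary remains in $X_0$ at every intermediate time, so that the entries $(P_k)_{i_l i_{l+1}}$ (which encode intersection with $X_0$) are genuinely nonzero. This is automatic because $\mcl^n\chi_i$ is supported on $n$-step survivors, i.e.\ the term $1_{X_{n-1}}$ is built in, guaranteeing each intermediate point $\hat{T}^l(x)$ with $l<n$ lies in $X_0$.
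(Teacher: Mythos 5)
Your proof is correct, and its first half is exactly the paper's argument: apply Theorem~\ref{thm:LiveMaume}(4) to $\chi_i$ to force $\int_{I_j}\mcl^n\chi_i\,dm>0$, and use finiteness of the set of admissible pairs $(i,j)$ to pick a single $n$ (the paper takes $n=\max_{i,j}n_{ij}$ rather than quantifying with the explicit rate $C\kappa^n$, but that is cosmetic, and like the paper's choice yours also works for all larger $n$). Where you diverge is the passage from integral positivity to $(P_k^n)_{ij}>0$: the paper notes that $\mcl_k=\pi_k\mcl$ and that $\pi_k$ can only enlarge the support of a non-negative function, so by monotonicity $\int_{I_j}(\pi_k\mcl)^n\chi_i\,dm\geq$ a positive quantity, and this integral equals $(P_k^n)_{ij}m(I_j)$; you instead rewrite $\int_{I_j}\mcl^n\chi_i\,dm$ via duality as $m(I_i\cap X_{n-1}\cap\hat{T}^{-n}I_j)$ and extract a positive-measure itinerary $i=i_0,\dots,i_n=j$ whose one-step transitions make every factor in the Chapman--Kolmogorov expansion positive. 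Both routes are valid; the paper's is shorter and stays at the level of operators, while yours is more elementary and makes explicit the point you flag at the end (that the built-in factor $\one_{X_{n-1}}$ keeps intermediate images in $X_0$, which is precisely why each $(P_k)_{i_l i_{l+1}}$, defined through $X_0$, is nonzero), at the cost of needing the standard forward-nonsingularity fact that piecewise $C^1$ expanding maps send positive-measure sets to positive-measure sets.
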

                                                                                                                                                                                
\begin{proof}
Fix $i,j$ satisfying the hypotheses. By Theorem~\ref{thm:LiveMaume}, $\|(\mathcal{L}^n\chi_i)/\rho^n-\mu(I_i)h\|_\infty\to 0$ as $n\to                                                                                                            
\infty$.                                                                                                                                                                            
Choose $n_{ij}$ large enough so that $\int_{I_j}                                                                                                                                    
\mathcal{L}^{N}\chi_i\ dm>0$ for all $N\geq n_{ij}$.                                                                                                                                                  
Because there are a finite number of $I_i$ and $I_j$ we can put                                                                                                                     
$n=\max_{i,j}n_{ij}$ and obtain $\int_{I_j} \mathcal{L}^{n}\chi_i\ dm>0$                                                                                                            
for all $i,j$ satisfying the hypotheses.                                                                                                                                            
Note that this implies $\int_{I_j} (\pi_k\mathcal{L})^n\chi_i\ dm>0$                                                                                                                
because the support of the integrand is possibly enlarged by taking Ulam projections.                                                                                                                          
This now implies $(P^n_k)_{ij}>0$. 
\end{proof}

\subsection{Proof of the main result}\label{sec:pfMainThm}
The lemmas presented in \S\ref{subS:AuxLem} allow us to derive parts (I) and (II) of Theorem~\ref{MainThm} via the perturbative approach from \cite{KellerLiverani98}.
Indeed, Theorem~\ref{thm:LiveMaume} shows that $\rho>\al$ is the leading eigenvalue of $\mcl$, and that it is simple.
Furthermore, $\mcl_k$ is a small perturbation of $\mcl$ for large $k$,
in the sense that $\sup_{\|f\|_{BV}=1} |(\mcl_k-\mcl) f|_1\to 0$ as $k\to \infty$.
Indeed, 
\begin{align*}
\sup_{\|f\|_{BV}=1} |(\mcl_k-\mcl) f|_{1} &=\sup_{\|f\|_{BV}=1} |(\pi_k-Id)\mcl f|_1\leq 
\sup_{ \|f\|_{BV}=\|\mcl\|_{BV} } |(\pi_k-Id)f|_1 \\
&\leq \|\mcl\|_{BV} \max_{I_j\in \mc{P}_k} m(I_j),
\end{align*}
and the latter is proportional to $\tau_k$, the diameter of the partition, which tends to 0 as $k\to \infty$.

Since $\pi_k$ decreases variation, Corollary~\ref{cor:LYineq} implies the uniform inequality
\begin{equation}\label{eq:LYUlam}
\var(\mathcal{L}_k f) \leq \alpha\,\var(f) + K |f|_1, \quad \forall k\in \N,
\end{equation}
which is the last hypothesis to check to be in the position to apply the perturbative machinery of \cite{KellerLiverani98}. 
This result ensures that for sufficiently large $k$, $\mcl_k$ has a simple eigenvalue $\rho_k$ near $\rho$, and its corresponding eigenvector $h_k\in BV$ converges to $h$ in $L^1(\leb)$, giving the convergence statements in \eqref{it:I} and \eqref{it:II}.

In order to show \eqref{it:III}, we consider the operator $\bar{\mcl}_k:=\mcl_k \circ \pi_k$.
In view of Lemma~\ref{lem:QCMeasureFromEvector}, $\bar{\mcl}_k^*\mu_k=\rho_k \mu_k$, and $ \bar{\mcl}_k h_k=\rho_k h_k$.
As in the previous paragraph, one can check that $\bar{\mcl}_k$ is a small perturbation of $\mcl$.
In fact, 
\[
\sup_{\|f\|_{BV}=1} |(\bar{\mcl}_k-\mcl) f|_1\leq 2\max_{I_j\in \mc{P}_k} m(I_j)=2\tau_k.
\]
Also, the Lasota-Yorke inequality \eqref{eq:LY} holds with $\mcl$ replaced by $\bar{\mcl}_k$.
Thus, \cite[Corollary~1]{KellerLiverani98} (see (iii) below) shows that for large $k$, $\rho_k$ is the leading eigenvalue of $\bar{\mcl}_k$.

Let $\Pi_k$ be the spectral projectors defined by
$$\Pi_k:= \frac{1}{2\pi\,i}\oint_{\partial B_\delta(\rho)}(z-\bar{\mcl}_k)^{-1}\,dz,$$
where $\delta$ is small enough to exclude all spectrum of $\mcl$ apart from the peripheral eigenvalue $\rho$. 
Also let $$\Pi_0:= \frac{1}{2\pi\,i}\oint_{\partial B_\delta(\rho)}(z-\mcl)^{-1}\,dz.$$
Then,~\cite[Corollary~1]{KellerLiverani98} provides $K_1,K_2>0$, and $\eta\in(0,1)$ for which
\begin{enumerate}[(i)]
\item $|(\Pi_k-\Pi_0)f|_1 \leq K_1\,{\tau_k}^\eta\,\|f\|_{BV}$,
\item \label{it:ii} $\|\Pi_k f\|_{BV} \leq K_2\,|\Pi_k f|_1$,
\item For large enough $k$, $\text{rank}(\Pi_k)=\text{rank}(\Pi_0)$.
\end{enumerate}
Since $\rho$ is simple and isolated, this setup implies that for large enough~$k$, each $\Pi_k$ is a bounded, rank-$1$ operator on $BV$:
$$\Pi_k = \mu_k(\cdot)\,h_k,$$
where each $h_k\in BV$, $\bar{\mcl}_k h_k = \rho_k\,h_k$ and $\rho_k\in B_\delta(\rho)$.
Since $h_k=\Pi_k h_k$ we can choose $|h_k|_1=1$ so that $\|h_k\|_{BV}\in [1,K_2]$.
Now, let $g\in BV$. Then, by the above,
\begin{align*}
|\mu_k(g)-\mu(g)| & = |(\mu_k(g)-\mu(g))h_k|_1\leq |\mu_k(g)h_k-\mu(g)\,h|_1 + | \mu(g) (h_k-h)|_1\\
&= |\Pi_{k}(g) - \Pi_0(g)|_1 + |\mu(g)|\,|h_k-h|_1 \to 0, \quad \text{as } k\to \infty.
\end{align*}
Since $\mu$ and $\mu_k$ are in fact measures, the above is enough to show that $\mu_k \to \mu$ in the weak-$*$ topology.

In particular, there is a $k_0$ such that $\mu_k(h)>0$ for all
$k\geq k_0$.
To show the last claim of \eqref{it:III}, we will show  that if $\mu_k(h)>0$ then $\supp(\mu)\subseteq\supp(\mu_k)$. 
Let $\psi_k$ be a leading right eigenvector of $\bar{\mcl}_k$ such that  $P_k\psi_k=\rho_k\psi_k$ and
$[\psi_k]_l=\frac{\mu(I_l)}{m(I_l)}$ ($l=1,\ldots,k$). Choose $i$ such that
$\mu(I_i)>0$, $j$ such that $[\psi_k]_j=\int_{I_j}h\,dm=\int_{I_j}h\,d\mu_k>0$ and $n\geq n_{ij}$ as in Lemma~\ref{lem:Garys}. Then,
$$[\psi_k]_i =\rho^{-n}[{P_k}^n\psi_k]_i\geq \rho^{-n}[{P_k}^n]_{ij}[\psi_k]_j>0.$$
This establishes that $\mu_k(I_i)>0$ and hence that $\supp(\mu)\subseteq \cup\{I_i~:~\mu(I_i)>0\}\subset \supp(\mu_k)$, as claimed.

For the quantitative statement of \eqref{it:I}, note that for every $f\in BV$, $0=(\mcl-\rho I)h = (\mcl-\rho I)\Pi_0 f$, so that
$$(\rho_k-\rho)h_k = (\bar{\mcl}_k-\mcl)h_k + (\mcl-\rho)(\Pi_k-\Pi_0)h_k.$$
Hence,
\begin{align*}
|\rho_k-\rho|\,|h_k|_1 &\leq  2\tau_k \|h_k\|_{BV} + (|\mcl|_1+|\rho|)K_1\,{\tau_k}^\eta\,\|h_k\|_{BV}\\
&\leq  2(\tau_k  + (1+|\rho|)K_1\,{\tau_k}^\eta)\,K_2\,|h_k|_1,
\end{align*}
where $K_1, K_2$ and $\eta$ are as above.
This gives the error bound $|\rho_k-\rho|\leq O({\tau_k}^\eta)$.
\qed

\subsection{Proof of Lemma~\ref{lem:EnlargingHoles}}\label{pflem:EnlargingHoles}

Let $\mcl_m$ be the transfer operator associated to $T_m$. That is, $\mcl_m(f)=\hat{\mcl}(1_{X_{m}}f)$. Then, $\mcl_m^n(f)=\hat{\mcl}^n(1_{X_{m+n-1}}f)$, and therefore,
\begin{equation}\label{eq:relTransferOps}
\hat{\mcl}^m\circ \mcl_m^n= \mcl_0^{m+n}.
\end{equation}
Hence, an interval is good for $T_0$ if and only if it is good for $T_m$ for every $m$.
In the rest of this proof we will say an interval is good if it is good for either (and therefore all) $T_m$.

Let $\mc{Z}_0=\mc{Z}\vee \mc{H}_0$, where $\mc{H}_0$ is the partition of $H_0$ into intervals, and we recall that $\mc{Z}$ is the monotonicity partition of $\hat{T}$.
Let $\mc{G}_\ep$ be an $\ep$-adequate partition for $T_0$. Then, a partition $\mc{G}_{\ep,m}$ may be constructed by cutting each element of $\mc{G}_\ep \vee \mc{Z}_0^{(m)}$ in at most $K$ pieces, where $K$ is independent of $m$, in such a way that the variation requirement $\max_{Z\in \mc{G}_{\ep,m}} \var_Z(\hat{g}1_{X_m})\leq \|DT_m^{-1}\|_\infty(1+\ep)$ is satisfied, and thus $\mc{G}_{\ep,m}$ is an $\ep$-adequate partition for $T_m$.  
Indeed, $K=2+\big \lceil \|\hat{g}\|_{\infty}/ \essinf(\hat{g}) \big\rceil$ is a possible choice.
The term 2 allows one to account for possible jumps at the boundary                                           
points of $H_m$, as there are at most two of them in each $Z\in \mc{G}_\ep \vee \mc{Z}_0^{(m)}$. The term                                                                                     
$M=\lceil \|\hat{g}\|_\infty/\essinf(\hat{g}) \rceil$                                                         
allows one to split each interval $Z\in \mc{G}_\ep \vee \mc{Z}_0^{(m)}$                                         
into at most $M$ subintervals $Z_1, \dots, Z_M$, in such a way that                                           
for every $1\leq j \leq M$, $\var_{int(Z_j)}(\hat{g}1_{X_m})\leq                                               
(1+\ep)\|\hat{g}1_{X_m}\|_{\infty}.$                                                                          
The chosen value of $M$ is necessary to account for the possible                                              
discrepancy between $\|\hat{g}1_{X_0}\|_{\infty}$ and                                                         
$\|\hat{g}1_{X_m}\|_{\infty}$. (Recall also that $\hat{g}$ is continuous                                      
on each $int(Z_j)$.)      

Now, let $b=\#\mc{Z}_0$. Then, each bad interval of $\mc{G}_\ep$ gives rise to at most $Kb^m$ (necessarily bad) intervals in $\mc{G}_{\ep,m}$. When a good interval of  $\mc{G}_\ep$ is split, it also gives rise to at most $Kb^m$ intervals in $\mc{G}_{\ep,m}$. In this case some of the intervals may be bad, but it is guaranteed that at least one of them remains good, as being good is equivalent to having non-zero $\mu$ measure.
Thus, the number of contiguous bad intervals in $\mc{G}_{\ep,m}$ is at most $Kb^m(B+2)$, where $B$ is the number of contiguous bad intervals in $\mc{G}_{\ep}$. Therefore, $\t{\xi}_\ep(T_m)=\exp\Big( \limsup_{n\to \infty} \frac1n \log (1+\xi_{\ep,n}(T_m)) \Big)\leq \t{\xi}_\ep(T_0)$.

Clearly, $\t{\Theta}(T_m)\leq \t{\Theta}(T_0)$. Finally, we will show that $\rho(T_0)\leq \rho(T_m)$.
Recall that $\rho_j$ is the leading eigenvalue of $\mcl_j$. Let $f\in BV$ be nonzero and such that $\mcl_0 f=\rho_0 f$. We claim that $\mcl_m (1_{X_{m-1}}f)=\rho_0 1_{X_{m-1}} f$, which yields the inequality, because necessarily $1_{X_{m-1}}f\neq 0$. Indeed,
\begin{align*}
  \rho_0 1_{X_{m-1}} f &=1_{X_{m-1}} \mcl_0 f 
=1_{X_{m-1}} \mcl_m f = \mcl_m f = \mcl_m( 1_{X_{m-1}} f),
\end{align*}
where the second equality follows from the fact that $\mcl_0 (1_{H_m} f)$ is supported on $T(H_m)=H_{m-1}$. The third one, from the fact that $\mcl_m f$ is supported on $T(X_m)\subseteq X_{m-1}$. The last one, because $\mcl_m(1_{H_{m-1}}f)=0$.

The first statement of the lemma follows.
The relations between escape rates, accims and quasi-conformal measures follow from comparing via Equation~\eqref{eq:relTransferOps} the statements of part (4) of Theorem~\ref{thm:LiveMaume} applied to $T_0$ and $T_m$. 
\qed

\subsection{Proof of Lemma~\ref{lem:AdmissibleAndUlamAdmissible}}\label{subS:AdmVsUlamAdm}\hfill \\
Assume $H_0$ is an  $\ep$-admissible hole for $\hat{T}$.
Then, $T^n:=(\hat{T}^n, H_{n-1})$ is an open Lasota-Yorke map.
Fix $\t{\Theta}<\eta<\rho$ so that for all $n$ sufficiently large,
\[
 \exp( \frac1n \log \|  (DT^n)^{-1} \|_{\infty})\exp( \frac1n \log (1+\xi_{\ep,n}))<\eta.
 \]
Then, $\|  (DT^n)^{-1} \|_{\infty}\xi_{\ep,n}<\eta^n$. By possibly making $n$ larger, we can assume that 
$(2+\ep)\|  (DT^n)^{-1} \|_{\infty}<\eta^n$, and that $2\eta^n <\rho^n$.
Then,
$\|  (DT^n)^{-1} \|_{\infty}(2+\ep+\xi_{\ep,n})<\rho^n$.

We remark that $\xi_\ep(T^n)=\xi_{\ep,n}(T)$.
Thus $\al_\ep(T^n)=\|  (DT^n)^{-1} \|_{\infty}(2+\ep+\xi_{\ep,n})$.
Furthermore, in view of Theorem~\ref{thm:LiveMaume},
$\rho(T^n)=\lim_{m\to\infty} \inf_{x\in D_{mn}} \frac{\mcl^{n(m+1)}1(x)}{\mcl^{nm}1(x)}=\mu(\mcl^{n}1)=\rho^n$.
\qed

\subsubsection*{Acknowledgments}
The authors thank Banff International Research Station (BIRS), where the present work was started, for the splendid working conditions provided.
CB's work is supported by an NSERC grant.
GF is partially supported by the UNSW School of Mathematics and an ARC Discovery Project
(DP110100068), and thanks the Department of Mathematics and Statistics at the University of Victoria for hospitality. 
CGT was partially supported by the Pacific Institute for the Mathematical Sciences (PIMS)
and NSERC. 
RM thanks the Department of Mathematics and          
Statistics (University of Victoria) for hospitality during part of the period when this paper was           
written.


\end{document}